\newcommand{\g}{\mathfrak{g}}
\newcommand{\om}{\omega}
\newcommand{\esp}{\quad\mbox{and}\quad}
\newcommand{\e}{\dot{e}}
\newcommand{\G}{{\mathfrak{g}}}
\newcommand{\B}{{\cal B}}
\newcommand{\al}{\alpha}
\newcommand{\be}{\beta}
\newcommand{\R}{\mathbb{R}}
\newtheorem{theo}{Theorem}[section]
\newtheorem{co}{Corollary}[section]
\newtheorem{remark}{Remark}
\begin{document}
	
	\begin{frontmatter}

		\title{The classification of left-invariant para-K\"ahler structures on simply connected four-dimensional Lie groups
		}

		\author[]{ M. W. Mansouri and A. Oufkou}
		\address[]{Universit\'e Ibno Tofail\\ Facult\'e des Sciences. K\'enitra-Maroc\\e-mail: mansourimohammed.wadia@uit.ac.ma\\ ahmed.oufkou@uit.ac.ma}
		
		% \address[label3]{}
		
		%\author{}
		
		%\address{}
		
		\begin{abstract}  We give a complete classification  of  left
			invariant   para-K\"ahler structures on four-dimensional  simply connected
			Lie groups up to an  automorphism. As an application we discuss  some curvatures  properties of the canonical connection associated to these structures as   flat, Ricci flat and existence of Ricci solitons.

		\end{abstract}
		
		\begin{keyword}
			Symplectic Lie algebras, para-K\"ahler structures,  Ricci soliton\\
			\MSC  53D05 \sep \MSC 53C30  \sep \MSC 53C25.
		\end{keyword}

	\end{frontmatter}
	
	\section{Introduction and main results}
	
	An almost para-complex structure on a $2n$-dimensional manifold $M$ is a field $K$ of endomorphisms of the tangent
	bundle $TM$ such that $K^2 = Id_{TM}$ and the two eigendistributions $T^{\pm}M := ker(Id\pm K)$ have the same rank.  An almost
	para-complex structure $K$ is said to be integrable if the distributions $T^{\pm}M$ are involutive. This is equivalent to the
	vanishing of the Nijenhuis tensor $N_K$ defined by
	\[N_K(X, Y ) = [X,Y] + [KX,KY] -K[KX,Y]-K[X,KY ],\]
	for vector fields $X$, $Y$ on $M$. In such a case $K$ is called a para-complex structure.
	A para-K\"ahler structure on a manifold $M$ is a pair $(\langle .,.\rangle,K)$ where $\langle .,.\rangle$ is a
	pseudo-Riemannian metric and $K$ is a parallel skew-symmetric para-complex
	structure. If $(\langle .,.\rangle,K)$ is a para-K\"ahler structure on $M$, then $\om=\langle .,.\rangle\circ K$ is a symplectic
	structure and the $\pm 1-$eigendistributions $T^\pm M$ of $K$ are two integrable
	$\om$-Lagrangian distributions. Due to this, a para-K\"ahler structure can be
	identified with a bi-Lagrangian structure $(\om, T^\pm M)$ where $\om$ is a symplectic
	structure and $T^\pm M$ are two integrable Lagrangian distributions. Moreover the Levi-Civita connection associate to
	neutral metric $\langle .,.\rangle$ coincides with the canonical connection associate to bi-Lagrangian structure (the unique
	symplectic connection  with parallelizes both foliations \cite{He}). For a survey on paracomplex geometry see  [6] and for
	background on bi-Lagrangian structures and their associated
	connections, the survey \cite{E-S-T} is a good starting point and contains
	further references (See as well \cite{A-M-T} and \cite{Bo}).
	
	Suppose now that $M$ is a Lie group $G$ and  $\omega$, $\langle .,.\rangle$ and $K$ are left invariant. If we denote
	by $\g$ the Lie algebras of $G$, then $(\langle .,.\rangle,K)$ is  determined by is restrictions to the Lie algebra $\g$. In this situation, $(\g, \langle .,.\rangle_e, K_e)$  or $(\g, \omega_e, K_e)$ is called a para-K\"ahler Lie algebra (e is unit of
	$G$), in the rest of this paper  a para-K\"ahler Lie algebra will be noted  $(\g,\omega,K)$. Recall that two para-K\"ahler Lie algebras $(\g_1,\omega_1,K_1)$ and $(\g_2,\omega_2,K_2)$ are said to be
	equivalent if there exists an isomorphism of Lie algebras
	$T :\g_1\longrightarrow \g_1$ such as $T^*\omega_2=\omega_1$ and $T_*K_1=K_2$. 
	Para-K\"ahler (bi-Lagrangian) structures on Lie algebras in general have been studied, for example, in \cite{Ba}, \cite{B-B} and \cite{B-M}. In \cite{Ha}, there is a study the
	existences of bi-Lagrangian structures on symplectic nilpotent Lie algebras of dimension $2$,$4$ and $6$.
	A first classification of  para-K\"ahler structures on four-dimensional Lie algebras was obtained by  Calvaruso in 
	\cite{Ca}. Another  description based on the classification of symplectic Lie algebras is  proposed by   Smolentsev and  Shagabudinova in \cite{S-S}.
	Benayadi and Boucetta  provide in \cite{B-B} a new characterization of para-K\"ahler Lie algebras using left symmetric bialgebras inroduced by Bai in \cite{Ba}. Based on this characterization  we propose in this paper, the  classification of para-K\"ahler structures on four-dimensional Lie algebras. Notice that our classification is more complete and precise than the other classifications existing in the literature.
	
	\textit{Notations}:  For $\{e_1, e_2,e_3,e_4\}$  a basis of $\G$, we denote by $\{e^1, e^2, e^3,e^4\}$ the
	dual basis on $\G^\ast$ and  $e^{ij}$  the two-form $e^i\wedge e^j$, $\e^{ij}$ is  the symmetric two-form $e^i \odot e^j$ and $E_{ij}$ is the endomorphism which sends $e_j$ to $e_i$ and vanishes on $e_k$ for $k\not=j$.
	
	The para-K\"ahler  Lie algebras $(\g, \langle .,.\rangle, K)$ is necessarily symplectic Lie algebra $(\g,\om)$. It is well known that a symplectic four-dimensional Lie
	algebra is necessarily solvable. The classification of symplectic
	four-dimensional  Lie algebras $(\G,\omega)$ is given by the following
	Table  (see \cite{O}).
	{\renewcommand*{\arraystretch}{0.9	}
		\begin{center}
			\begin{longtable}{llc}
				\hline
				Case & No vanishing brackets & $\omega$ \\
				\hline
				$\mathfrak{r} \mathfrak{h}_3$ &$[e_1,e_2]=e_3$&$e^{14}+e^{23}$ \\
				\hline
				$\mathfrak{r}\mathfrak{r}_{3,0}$ & $[e_1,e_2]= e_2$&$e^{12}+e^{34}$ \\
				\hline
				$\mathfrak{r}\mathfrak{r}_{3,-1}$ & $[e_1,e_2]= e_2$, $[e_1, e_3] =-e_3$&$e^{14}+e^{23}$ \\
				\hline
				$\mathfrak{r}\mathfrak{r}^\prime_{3,0}$ &$[e_1,e_2]=-e_3$, $[e_1,e_3]=e_2$&$e^{14}+e^{23}$ \\
				\hline
				$\mathfrak{r}_2\tau_2$ &$[e_1,e_2] =e_2$, $[e_3,e_4]=e_4$&$e^{12}+\mu e^{13}+e^{34}$ \\
				\hline
				\multirow{2}{*}{$\mathfrak{r}^\prime_2$} &$[e_1,e_3]= e_3$, $[e_1,e_4]=e_4$, &\multirow{2}{*}{$e^{14}+e^{23}$} \\
				&$[e_2,e_3] = e_4$, $[e_2,e_4] = -e_3$&\\
				\hline
				$\mathfrak{n}_4$& $[e_4,e_1]=e_2$, $[e_4,e_2]=e_3$ &$e^{12}+e^{34}$ \\
				\hline
				$\mathfrak{r}_{4,0}$ &$[e_4,e_1]=e_1$, $[e_4,e_3]=e_2$ &$e^{14}\mp e^{23}$ \\
				\hline
				$\mathfrak{r}_{4,-1}$ &$[e_4,e_1]=e_1$, $[e_4,e_2]=-e_2$, $[e_4,e_3]=e_2-e_3$&$e^{13}+e^{24}$ \\
				\hline
				$\mathfrak{r}_{4,-1,\beta}$ &$[e_4,e_1]=e_1$, $[e_4,e_2]=-e_2$, $[e_4,e_3]=\beta e_3$&$e^{12}+e^{34}$ \\
				\hline
				$\mathfrak{r}_{4,\alpha,-\alpha}$&$[e_4,e_1]=e_1$, $[e_4,e_2]=\alpha e_2$, $[e_4,e_3]=-\alpha e_3$& $e^{14}+e^{23}$\\
				\hline
				$\mathfrak{r}^\prime_{4,0,\delta}$ &$ [e_4, e_1]=e_1$, $[e_4, e_2]=-\delta e_3$, $[e_4, e_3]=\delta e_2$&$e^{14}\mp e^{23}$ \\
				\hline
				\multirow{2}{*}{$\mathfrak{d}_{4,1}$}&$[e_1,e_2]=e_3$, $[e_4,e_3]= e_3$, &$e^{12}-e^{34}$\\
				&$[e_4, e_1]=e_1$&$e^{12}-e^{34}+e^{24}$\\
				\hline
				\multirow{2}{*}{$\mathfrak{d}_{4,2}$}&$[e_1,e_2]=e_3$, $[e_4,e_3]= e_3$,& $e^{12}-e^{34}$\\
				& $[e_4, e_1]=2e_1$, $[e_4, e_2]=-e_2$&$e^{14}\mp e^{23}$\\
				
				\hline
				\multirow{2}{*}{$\mathfrak{d}_{4,\lambda}$}&$[e_1,e_2]=e_3$, $[e_4,e_3]= e_3$, &\multirow{2}{*}{$e^{12}-e^{34}$} \\
				&$[e_4, e_1] =\lambda e_1$, $[e_4, e_2]= (1-\lambda)e_2$&\\
				\hline
				\multirow{2}{*}{$\mathfrak{d}^\prime_{4,\delta}$}&$[e_1,e_2] = e_3$, $[e_4, e_1] = \frac{\delta}{2}e_1-e_2$,&\multirow{2}{*}{$\mp(e^{12}-\delta e^{34})$}\\
				& $[e_4,e_3]=\delta e_3$, $[e_4,e_2] = e_1+\frac{\delta}{2}e_2$&\\
				\hline
				\multirow{2}{*}{$\mathfrak{h}_4$}&$[e_1,e_2]=e_3$, $[e_4, e_3]=e_3$, &\multirow{2}{*}{$\mp(e^{12}-e^{34})$}\\
				&$[e_4, e_1]=\frac{1}{2}e_1$, $[e_4,e_2] = e_1 +\frac{1}{2}e_2$&\\
				\hline
				\caption{ Symplectic four-dimensional Lie algebras\\  $(\mu\geq0$, $-1\leq\beta<1$, $-1<\alpha<0$, $\delta>0$ and $\lambda\geq\frac{1}{2}$,  $\lambda\not=1,2 ) $.}
				\label{tab1}
			\end{longtable}
			
		\end{center}

		Our main result is the following.
		\begin{theo}\label{1-1}Let $(\g,\om,K)$ be a four-dimensional para-K\"ahler Lie algebra. Then $(\g,\om,K)$ is isomorphic to one of the following Lie algebras with the given para-K\"ahler structures:
			
			\textbf{Lie algebra $\mathfrak{r} \mathfrak{h}_3$}
			\begin{enumerate}
				\item[]For $\om=e^{14}+e^{23}$ 
				\begin{itemize}
					\item[]$K_1=-E_{11}+E_{21}+E_{22}-E_{33}-E_{43}+E_{44}$	
					\item[]$K_2=\mp(E_{11}-E_{22}+E_{33}-E_{44})$
									\end{itemize}
			\end{enumerate}		
			\textbf{Lie algebra $\mathfrak{r} \mathfrak{r}_{3,0}$} 
			\begin{enumerate}
				\item[]For $\om=e^{12}+e^{34}$ 
				\begin{itemize}
					\item[]$K_1=-E_{11}+E_{22}-E_{33}+E_{44}$
					\item[]$K_2=E_{11}+xE_{12}-E_{22}+E_{33}-E_{44}$
				\end{itemize}
			\end{enumerate}  
			\textbf{Lie algebra $\mathfrak{r}\mathfrak{r}_{3,-1}$}
			\begin{enumerate}
				\item[]For $\om=e^{14}+e^{23}$
				\begin{itemize}
					\item[]$K_1=\mp(E_{11}+E_{22}\pm E_{23}-E_{33}-E_{44})$
					\item[]$K_2=E_{11}+E_{22}+xE_{14}-E_{33}-E_{44}$
					\item[]$K_{3}=E_{11}-E_{22}\pm E_{32}+E_{33}-E_{44}$
					\item[]$K_{4}=E_{11}-E_{22}+E_{33}-E_{44}$
				\end{itemize}	
			\end{enumerate}
			\textbf{Lie algebra $\mathfrak{r}_{2}\mathfrak{r}_{2}$} 
			\begin{enumerate}
				\item[]For $\omega=e^{12}+\mu e^{13}+e^{34},\; (\mu>0)$
				\begin{itemize}
			\item[]$K_1=-E_{11}+E_{22}+E_{33}-E_{44}$
					\item[]$K_2=\mp(E_{11}-2E_{13}-E_{22}-E_{33}+2E_{42}+E_{44})$
				\end{itemize}
				\item[]For $\omega=e^{12}+e^{34}$
				\begin{itemize}
					\item[]$K_1=E_{11}-E_{22}-2E_{24}+2E_{31}-E_{33}+E_{44}$
					\item[]$K_2=-E_{11}+xE_{12}+E_{13}+xE_{14}+E_{22}+xE_{32}+xE_{34}-E_{42}+\frac{1}{x}E_{43}$
					\item[]$K_3=-E_{11}+E_{22}-E_{33}+E_{44}$
					\item[]$K_4=-E_{11}+E_{22}+E_{33}+xE_{43}-E_{44}$
					\item[]$K_5=E_{11}-2E_{13}-E_{22}-E_{33}+2E_{42}+E_{44}$
					\item[]$K_6=E_{11}+xE_{12}-E_{22}+E_{33}+yE_{34}-E_{44}$
					\item[]$K_7=E_{11}+xE_{12}+xE_{14}-E_{22}+xE_{32}+E_{33}+xE_{34}-E_{44}$
				\end{itemize}
			\end{enumerate}
			\textbf{Lie algebra $\mathfrak{r}_{2}'$} 
			\begin{enumerate}
				\item[]For $\omega=e^{14}+e^{23} $
				\begin{itemize}	
					\item[]$K_{1}=E_{11}+xE_{14}-E_{22}-\frac{4}{x}E_{32}+E_{33}-E_{44}$
					\item[]$K_{2}=-E_{11}-E_{22}+E_{33}+E_{44}$
					\item[]$K_{3}=xE_{11}+2yE_{12}+(1-x)E_{13}-2yE_{14}-2yE_{21}+xE_{22}+2yE_{23}+(1-x)E_{24}+(1+x)E_{31}+2yE_{32}-xE_{33}-2yE_{34}-2yE_{41}+(1+x)E_{42}+2yE_{43}-xE_{44}$
					\item[]$K_{4}=E_{11}+xE_{14}+E_{22}-E_{33}-E_{44}$
				\end{itemize}
			\end{enumerate}
			\textbf{Lie algebra $\mathfrak{r}_{4,0}$} 
			\begin{enumerate}
				\item[]For $\omega=e^{14}+ e^{23}$ or  $\omega=e^{14}-e^{23}$ 
				\begin{itemize}
				\item[]$K=\mp(E_{11}-E_{22}+E_{33}-E_{44})$
				\end{itemize}
			\end{enumerate}
			\textbf{Lie algebra $\mathfrak{r}_{4,-1}$}
			\begin{enumerate}
				\item[]For $\omega=e^{13}+e^{24}$
				\begin{itemize}
					\item[]$K_{1}=E_{11}+xE_{13}-E_{22}-E_{33}+E_{44}$
					\item[]$K_{2}=-E_{11}+E_{22}+E_{33}-E_{44}$
				\end{itemize}
			\end{enumerate}
			\textbf{Lie algebra $\mathfrak{r}_{4,-1,\beta} \;(-1<\beta<1)$} 
			\begin{enumerate}
				\item []For $\omega =e^{12}+e^{34}$
				\begin{itemize}
					\item []$K_{1}=E_{11}\mp E_{12}-E_{22}-E_{33}+E_{44}$
					\item []$K_{2}=E_{11}-E_{22}\mp E_{33}\pm E_{44}$
					\item []$K_{3}=E_{11}-E_{22}+xE_{34}+\frac{1}{x}E_{43}$
					\item []$K_{4}=E_{11}-E_{22}+E_{33}-E_{44}$
					\item []$K_{5}=-E_{11}\mp E_{12}+E_{22}+E_{33}-E_{44}$
									\end{itemize}
			\end{enumerate}			
			\textbf{Lie algebra $\mathfrak{r}_{4,-1,-1}$} 
			\begin{enumerate}
				\item []For $\omega=e^{12}+e^{34} $
				\begin{itemize}
					\item []$K_{1}=-E_{11}+xE_{21}+E_{22}+E_{23}-E_{33}+E_{41}+E_{44}$
					\item []$K_{2}=-E_{11}+E_{21}+E_{22}-E_{33}+E_{44}$
					\item []$K_{3}=-E_{11}-E_{21}+E_{22}-E_{33}+E_{44}$
					\item []$K_{4}=-E_{11}+E_{22}-E_{33}+xE_{43}+E_{44}$
					\item []$K_{5}=E_{11}\mp E_{12}-E_{22}-E_{33}+E_{44}$
					\item []$K_{6}=E_{11}-E_{22}-E_{33}+xE_{43}+E_{44}$
					\item []$K_{7}=-E_{11}+E_{22}+E_{33}-E_{44}$
					\item []$K_{8}=E_{11}\mp E_{21}-E_{22}+E_{33}-E_{44}$
								\end{itemize}
			\end{enumerate}
			\textbf{Lie algebra $\mathfrak{r}_{4,\al,-\al}$}\;($-1<\al<0$)
			\begin{enumerate}
				\item [] For $\omega=e^{14}+e^{23} $
				\begin{itemize}
					\item [] $K_{1}=-E_{11}+E_{22}+E_{23}-E_{33}+E_{44}$
					\item [] $K_{2}=-E_{11}+E_{22}-E_{23}-E_{33}+E_{44}$
					\item [] $K_{3}=\mp(-E_{11}+E_{22}-E_{33}+xE_{41}+E_{44})$
					\item [] $K_{4}=E_{11}+E_{22}-E_{33}-E_{44}$
					\item [] $K_{5}=\mp(E_{11}+E_{22}\mp E_{32}-E_{33}-E_{44})$
				\item [] $K_{6}=E_{11}-E_{22}-E_{23}+E_{33}-E_{44}$
					\item [] $K_{7}=E_{11}-E_{22}+E_{23}+E_{33}-E_{44}$				
					\item [] $K_{8}=-E_{11}-E_{22}-E_{32}+E_{33}+E_{44}$
					\item [] $K_{9}=-E_{11}-E_{22}+E_{33}+E_{44}$
				\end{itemize}
			\end{enumerate}
			
			\textbf{Lie algebra $\mathfrak{d}_{4,1}$} 
			\begin{enumerate}
				\item[]For $\om=e^{12}-e^{34}$
				\begin{itemize}
					\item[]$K_1=E_{11}\mp E_{12}-E_{22}-E_{33}+E_{44}$
					\item[]$K_2=\mp E_{11}\pm E_{22}-E_{33}+xE_{43}+E_{44}$
					\item[]$K_3=E_{11}-E_{22}+E_{33}-E_{44}$
					\item[] $K_4=-E_{11}\mp E_{12}+E_{22}+E_{33}-E_{44}$
					\item[]$K_5=E_{11}+E_{21}-E_{22}+xE_{23}+E_{33}-xE_{41}-E_{44}$
					\item[]$K_6=-E_{11}\mp E_{21}+E_{22}-E_{33}+E_{44}$
									\end{itemize}			
				\item[]For $\om=e^{12}-e^{34}+e^{24}$			
				\begin{itemize}
					\item[]$K_{1}=\mp(E_{11}+xE_{12}-E_{22}-E_{33}+E_{44})$
				\end{itemize}
			\end{enumerate}	
			\textbf{Lie algebra $\mathfrak{d}_{4,2}$}
			\begin{enumerate}
				\item[]For $\om=e^{12}-e^{34}$
				\begin{itemize}
					\item[]$K_1=E_{11}\mp E_{12}-E_{22}-E_{33}+E_{44}$
					\item[]$K_2=E_{11}-E_{22}-E_{33}+E_{44}$
					\item[]$K_3=E_{11}-E_{22}+E_{33}+xE_{43}-E_{44}$
				\end{itemize}
				\item[]For $\om=e^{14}-e^{23}$
				\begin{itemize}
					\item[]$K_1=-E_{11}-E_{22}+\frac{1}{x}E_{32}+E_{33}-2xE_{14}+E_{44}$
					\item[]$K_2=-E_{11}-E_{22}-2E_{31}+xE_{32}+E_{33}+2E_{24}+E_{44}$
					\item[]$K_3=E_{11}+E_{22}-2E_{31}+xE_{32}-E_{33}+2E_{24}-E_{44}$
					\item[]$K_4=E_{11}-E_{22}+xE_{12}+xE_{32}+E_{33}+xE_{14}+xE_{34}-E_{44}$
					\item[]$K_5=-E_{11}+E_{22}-E_{33}+xE_{41}+E_{44}$
					\item[]$K_6=E_{11}-E_{22}-2xE_{23}+E_{33}+xE_{41}-E_{44}$
					\item[] $K_7=-E_{11}+2xE_{21}+E_{22}-2xE_{23}-E_{33}-2xE_{41}-2xE_{43}+E_{44}$
				\end{itemize}
				\item[]For $\om=e^{14}+e^{23}$
				\begin{itemize}
					\item[]$K_1=-E_{11}-E_{22}+\frac{1}{x}E_{32}+E_{33}-2xE_{14}+E_{44}$
					\item[]$K_2=-E_{11}+E_{22}-E_{33}+xE_{14}+E_{44}$
					\item[]$K_3=E_{11}-E_{22}+xE_{23}+E_{33}-\frac{1}{2}xE_{14}-E_{44}$
				\end{itemize}
			\end{enumerate}	
			\textbf{Lie algebra $\mathfrak{d}_{4,\frac{1}{2}}$}
			\begin{enumerate}
				\item[]For $\omega =e^{12}-e^{34}$
				\begin{itemize}
					\item[]$K_1=E_{11}-E_{22}-E_{33}+xE_{43}+E_{44}$
					\item[]$K_2=E_{11}-E_{22}+E_{33}-E_{44}$
				\end{itemize}
			\end{enumerate}
			\textbf{Lie algebra $\mathfrak{d}_{4,\lambda}$}\; ($\lambda>\frac12$, $\lambda\not=1,2$)
			\begin{enumerate}
				\item[]For $\omega =e^{12}-e^{34}$
				\begin{itemize}
					\item[]$K_1=E_{11}\mp E_{12}-E_{22}-E_{33}+E_{44}$
					\item[]$K_2=-E_{11}+E_{21}+E_{22}-E_{33}+E_{44}$
					\item[]$K_3=-E_{11}-E_{21}+E_{22}-E_{33}+E_{44}$
					\item[]$K_4=E_{11}-E_{22}-E_{33}+xE_{43}+E_{44}$
					\item[]$K_5=-E_{11}+E_{22}-E_{33}+xE_{43}+E_{44}$
					\item[]$K_6=\mp E_{11}+xE_{12}\pm E_{22}+E_{33}-E_{44}$
									\end{itemize}
			\end{enumerate}
			\textbf{Lie algebra $\mathfrak{h}_{4}$}
			\begin{enumerate}
				\item[]For $\omega =\pm(e^{12}-e^{34})$
				\begin{itemize}
					\item[]$K_1=\mp(E_{11}-E_{22}-E_{33}+E_{44})$
								\end{itemize}
			\end{enumerate}
		\end{theo}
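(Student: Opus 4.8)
The plan is to combine the two reductions recalled above. First, a left-invariant para-K\"ahler structure is the same datum as a para-K\"ahler Lie algebra $(\g,\om,K)$, and, as explained in the introduction, the latter is equivalent to a \emph{bi-Lagrangian} structure on $\g$: the symplectic form $\om$ together with the two eigenspaces $\g^{+}$ (eigenvalue $+1$) and $\g^{-}$ (eigenvalue $-1$) of $K$, which must be transverse Lagrangian subalgebras. Since $\dim\g=4$, each $\g^{\pm}$ is a $2$-dimensional subspace, ``Lagrangian'' means $\om$ vanishes on it, and ``transverse'' means $\g=\g^{+}\oplus\g^{-}$. Conversely, an ordered pair $(\g^{+},\g^{-})$ of transverse Lagrangian subalgebras determines $K$ by $K_{|\g^{+}}=\mathrm{Id}$ and $K_{|\g^{-}}=-\mathrm{Id}$: the compatibility $\om(KX,KY)=-\om(X,Y)$ --- equivalently the symmetry and non-degeneracy of $\langle\cdot,\cdot\rangle=\om(\cdot,K\cdot)$, and its neutral signature --- is automatic because $\om$ restricts to $0$ on each $\g^{\pm}$, and $N_K=0$ holds precisely because the $\g^{\pm}$ are subalgebras. (This is also the content of the left-symmetric bialgebra characterization of \cite{B-B} and \cite{Ba} on which the present classification is based.) Hence everything reduces to: for each four-dimensional symplectic Lie algebra $(\g,\om)$ of Table~\ref{tab1}, find all ordered pairs of transverse $2$-dimensional Lagrangian subalgebras, modulo the action of $\mathrm{Aut}(\g,\om):=\{T\in\mathrm{Aut}(\g):T^{*}\om=\om\}$ and modulo the involution that swaps $\g^{+}$ and $\g^{-}$ (i.e.\ replaces $K$ by $-K$); this last symmetry, together with the sign ambiguities already present in the forms of Table~\ref{tab1}, accounts for the $\pm,\mp$ in the statement.

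The computation then proceeds entry by entry. For a fixed $(\g,\om)$ one determines $\mathrm{Aut}(\g)$ from the structure constants in Table~\ref{tab1}, extracts the stabilizer $\mathrm{Aut}(\g,\om)$ of $\om$, and enumerates the $2$-dimensional subalgebras $\h=\spa(u,v)$: the condition $[u,v]\in\spa(u,v)$ is a polynomial system in the coordinates of $u$ and $v$, and since a real $2$-dimensional Lie algebra is either abelian or the unique non-abelian one $\mathfrak{aff}(\R)$, the search splits along these two types. Retaining only the Lagrangian ones ($\om(u,v)=0$), one forms all ordered transverse pairs $(\g^{+},\g^{-})$, writes the associated $K$ in the basis $\{e_1,e_2,e_3,e_4\}$, and quotients by $\mathrm{Aut}(\g,\om)$. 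The orbits are separated by invariants such as the isomorphism type of each $\g^{\pm}$, its position relative to the derived ideal, the centre and the nilradical, and the way $\g^{+}$ acts on $\g^{-}$ through the bracket; the moduli that $\mathrm{Aut}(\g,\om)$ cannot absorb survive as the free parameters $x$ (and $y$) in the statement. For each representative one finally records $K$ and checks $K^{2}=\mathrm{Id}$, $\dim\g^{\pm}=2$, and $N_K=0$.

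A few entries of Table~\ref{tab1} must be split first: $\mathfrak r_2\tau_2$ yields the two cases $\mu>0$ and $\mu=0$ (where $\om=e^{12}+e^{34}$); $\mathfrak d_{4,1}$ and $\mathfrak d_{4,2}$ each carry two non-equivalent symplectic forms; the family $\mathfrak d_{4,\lambda}$ must be handled with $\lambda=\tfrac12$ set apart (the excluded values $\lambda=1,2$ being $\mathfrak d_{4,1}$ and $\mathfrak d_{4,2}$); and in the one-parameter families $\mathfrak r_{4,-1,\beta}$ and $\mathfrak r_{4,\alpha,-\alpha}$ one must isolate the degenerate value $\beta=-1$ (that is, $\mathfrak r_{4,-1,-1}$), because there the automorphism group --- and with it the variety of Lagrangian subalgebras --- jumps. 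Running the procedure through the resulting finite list and normalizing representatives produces exactly the families displayed in the theorem.

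The conceptual input is modest --- everything is finite-dimensional linear algebra followed by one quotient by a group action --- so the real work, and the main obstacle, is twofold. \emph{Completeness}: the locus ``$\g^{\pm}$ are transverse Lagrangian subalgebras'' is a real algebraic set with components of several dimensions (isolated points as well as curves), and in the parametric Lie algebras this component structure degenerates at special parameter values --- which is exactly why $\mathfrak r_{4,-1,-1}$ and $\mathfrak d_{4,\frac12}$ appear as separate cases --- so one must be careful not to drop a branch; a systematic case split on the rank of $\om$ restricted to the relevant $\ad$-invariant subspaces keeps this under control. \emph{Irredundancy}: one must prove that the listed $K$'s lie in distinct $\mathrm{Aut}(\g,\om)$-orbits and that each one-parameter family $x\mapsto K_i(x)$ is genuinely a family whose special values are not secretly equivalent to another item on the list; the invariants named above do this. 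As a sanity check, the output agrees, on the common ground, with the partial classifications of Calvaruso~\cite{Ca} and of Smolentsev and Shagabudinova~\cite{S-S}, which it refines.
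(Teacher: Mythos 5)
Your reduction to ordered pairs of transverse Lagrangian subalgebras is sound (the equivalence with para-K\"ahler structures, including the automatic parallelism of $K$, is Hess's theorem, stated in the introduction), but it is not the route the paper takes. The paper never searches for Lagrangian subalgebras inside the symplectic algebras of Table~\ref{tab1}. It instead uses the Benayadi--Boucetta phase-space characterization (Theorem~\ref{2.1}): every para-K\"ahler Lie algebra is $U\oplus U^{*}$ with the standard $\om_0$, $K_0$, built from a pair of Lie-extendible left-symmetric products on $U$ and $U^{*}$. Since the real two-dimensional left-symmetric algebras are classified (\cite{K}), the paper fixes the product on $U$ to be one of finitely many models and solves the Jacobi identity for the unknown product on $U^{*}$; this yields the normal forms of Tables~\ref{tab4}--\ref{tab5} (Theorem~\ref{main2}), which are then matched to Ovando's list by the explicit isomorphisms of Tables~\ref{tab6}--\ref{tab7} and finally classified modulo $\mathrm{Aut}(\g,\om)$ --- only this last step coincides with the tail of your method. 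What the paper's detour buys is a tightly controlled completeness argument: the candidates are cut out by the Jacobi identity in a handful of structure constants, organized by the left-symmetric type of $\g^{+}$, rather than by the a priori larger variety of all pairs of transverse Lagrangian subalgebras. Your route is essentially Calvaruso's \cite{Ca}, which the paper cites and claims to refine; it buys directness and the natural invariants (type of $\g^{\pm}$, position relative to the derived ideal, the induced action of $\g^{+}$ on $\g^{-}$) that one needs anyway for the irredundancy check.

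Two cautions. First, do not quotient by the swap $\g^{+}\leftrightarrow\g^{-}$: the paper's equivalence is $T^{*}\om_2=\om_1$ and $T_{*}K_1=K_2$ for a Lie algebra isomorphism $T$, and $K\mapsto -K$ is not part of it. Whether $K$ and $-K$ are equivalent must be decided case by case by exhibiting, or excluding, an element of $\mathrm{Aut}(\g,\om)$ conjugating one to the other; the $\mp$ entries in the statement record exactly the cases where they are not equivalent, so collapsing the two orderings at the outset would lose part of the classification. Second, as with the paper's own proof (which carries out only $\mathfrak{r}\mathfrak{r}_{3,0}$ by hand and delegates the rest to Maple), all of the substance of the theorem lies in the executed computation --- the subalgebra varieties, the orbit separation, and the normalization of the surviving parameters; your proposal specifies a correct algorithm but verifies none of the listed entries.
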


		\begin{co}
			The symplectic Lie  algebras $\mathfrak{r}\mathfrak{r}^\prime_{3,0}$ ,  $\mathfrak{n}_{4}$,  $\mathfrak{r}_{4,0,\delta}^{\prime}$ and  $\mathfrak{d}_{4,\delta}^{\prime}$    does not admit a  para-K\"ahler structure.
		\end{co}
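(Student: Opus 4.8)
The plan is to use the bi-Lagrangian reformulation recalled in the introduction: a para-K\"ahler structure on a symplectic Lie algebra $(\g,\om)$ is the same datum as a splitting $\g=\g^{+}\oplus\g^{-}$ of $\g$ into two \emph{transversal} Lagrangian subalgebras, the $(\pm1)$-eigenspaces of $K$. Hence it suffices to show that none of the four symplectic Lie algebras $\mathfrak{r}\mathfrak{r}^\prime_{3,0}$, $\mathfrak{n}_{4}$, $\mathfrak{r}^\prime_{4,0,\delta}$, $\mathfrak{d}^\prime_{4,\delta}$ of Table~\ref{tab1} admits a pair of complementary $2$-dimensional Lagrangian subalgebras, for \emph{every} admissible sign and every value $\delta>0$. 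I would in fact prove the stronger fact that in each of these four cases there is a distinguished line $\ell\subset\g$ that is contained in \emph{every} Lagrangian subalgebra; two such subalgebras can then never be transversal.

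The key is that each of the four algebras carries a natural $3$-dimensional ideal $\mathfrak{n}$ with $\g=\mathfrak{n}\rtimes\R\xi$: one takes $\mathfrak{n}=\spa(e_2,e_3,e_4)$, $\xi=e_1$ for $\mathfrak{r}\mathfrak{r}^\prime_{3,0}$, and $\mathfrak{n}=\spa(e_1,e_2,e_3)$, $\xi=e_4$ in the three other cases; here $\mathfrak{n}$ is abelian except for $\mathfrak{d}^\prime_{4,\delta}$, where it is the Heisenberg algebra. A $2$-dimensional subalgebra $\mathfrak{a}$ is then of one of two types. If $\mathfrak{a}\subset\mathfrak{n}$: a direct computation shows $\om|_{\mathfrak{n}}$ has rank $2$ with $1$-dimensional radical $\ell$ (namely $\R e_4$, $\R e_3$, $\R e_1$, $\R e_3$, respectively), so $\mathfrak{a}$ is isotropic only if $\ell\subset\mathfrak{a}$ (and in the Heisenberg case being a subalgebra already forces this). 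If $\mathfrak{a}\not\subset\mathfrak{n}$: write $\mathfrak{a}=\spa(\xi+n_{0},v)$ with $n_{0},v\in\mathfrak{n}$, $v\neq0$; the subalgebra condition forces $[\xi+n_{0},v]\in\mathfrak{a}\cap\mathfrak{n}=\R v$, i.e.\ $v$ is a real eigenvector of $\ad_{\xi}|_{\mathfrak{n}}$ up to a correction by $\ad_{n_{0}}|_{\mathfrak{n}}$ (which takes values in $\ell$ and does not affect the conclusion). Now $\ad_{\xi}|_{\mathfrak{n}}$ has a pair of complex-conjugate non-real eigenvalues for $\mathfrak{r}\mathfrak{r}^\prime_{3,0}$, $\mathfrak{r}^\prime_{4,0,\delta}$ and $\mathfrak{d}^\prime_{4,\delta}$ (a ``rotation block'') and a nilpotent shift $e_1\mapsto e_2\mapsto e_3\mapsto 0$ for $\mathfrak{n}_4$, so its only real eigenline is exactly $\ell$; hence $v\in\ell$. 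Since $\om$ pairs $\ell$ nontrivially with the $\xi$-direction (the value being $\pm1$, resp.\ $\pm\delta\neq0$ for $\mathfrak{d}^\prime_{4,\delta}$), the plane $\mathfrak{a}$ is not isotropic. So there are no Lagrangian subalgebras of the second type, and every Lagrangian subalgebra contains $\ell$.

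It follows that these four symplectic Lie algebras carry no para-K\"ahler structure; equivalently, and more cheaply once Theorem~\ref{1-1} is proved, they simply do not occur in its list. The one point that requires care is the exhaustive enumeration of $2$-dimensional subalgebras — in particular verifying that in the second type the correction $n_{0}$ cannot produce a new isotropic subalgebra, and that for $\mathfrak{d}^\prime_{4,\delta}$ the non-abelian subalgebra $\mathfrak{aff}(\R)$ can occur only as $\spa(\tfrac1\delta e_4+n_{0},e_3)$. Beyond that everything is a short linear-algebra check, uniform in the sign choices $\mp$ and in $\delta>0$ because these alter neither the radical of $\om|_{\mathfrak{n}}$ nor the eigenvalues of $\ad_{\xi}|_{\mathfrak{n}}$.
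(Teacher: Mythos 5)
Your argument is correct, and it takes a genuinely different route from the paper. In the paper this corollary is obtained by exhaustion: Theorem \ref{main2} produces, via the Benayadi--Boucetta left-symmetric-bialgebra machinery, the complete list of four-dimensional para-K\"ahler Lie algebras (Tables \ref{tab4}, \ref{tab5}), and the isomorphism Tables \ref{tab6}, \ref{tab7} show that $\mathfrak{r}\mathfrak{r}'_{3,0}$, $\mathfrak{n}_4$, $\mathfrak{r}'_{4,0,\delta}$ and $\mathfrak{d}'_{4,\delta}$ never occur as targets; the corollary is then read off from Theorem \ref{1-1}. You instead give a direct, self-contained obstruction via the bi-Lagrangian reformulation: in each of the four algebras you exhibit a codimension-one ideal $\mathfrak{n}$ with $\g=\mathfrak{n}\rtimes\R\xi$ such that the unique real eigenline $\ell$ of $\ad_\xi|_{\mathfrak{n}}$ coincides with the radical of $\om|_{\mathfrak{n}}$ and is $\om$-paired nontrivially with $\xi$; your two-case analysis (subalgebras inside $\mathfrak{n}$ versus transversal to it) then forces $\ell$ into every two-dimensional Lagrangian subalgebra, so no two such subalgebras can be complementary. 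I checked the linear algebra in all four cases, including the Heisenberg correction $[n_0,v]\in\R e_3$ for $\mathfrak{d}'_{4,\delta}$ and the sign/parameter choices $\mp$, $\delta>0$; it all works. What your approach buys is a short, conceptual nonexistence proof independent of the full classification; what the paper's approach buys is that the nonexistence comes for free once the (much longer) positive classification is complete. Both arguments share the reliance on Ovando's list of symplectic structures up to automorphism, and both are insensitive to rescaling of $\om$ since Lagrangian subspaces are scale-invariant.
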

		The paper is organized as follows. Section 2 contains the basic results which are essential to the classification of four-dimensional  para-K\"ahler   Lie algebras (proof of the Theorem \ref{1-1}).  Theorem \ref{2.1} and  Theorem \ref{main2} are the key steps in this proof.
		Section 3 is devoted to some  curvature properties of four-dimensional para-K\"ahler metrics. Section 4 contains  the  tables of Theorems \ref{main2} and the isomorphisms tables used in the proof of Theorem \ref{1-1}.

		The software Maple $18^\circledR$ has been used to check all needed calculations.
		\section{Proof of The  Theorem \ref{1-1}}
		
		In this section we begin  with a reminder of the new approach introduced by Benayadi and Boucetta  in \cite{B-B},    which characterizes the para-K\"ahler Lie algebras.

		Recall that, a para-K\"ahler  Lie algebra $(\g,\langle.,. \rangle,K)$ is carries a  Levi-Civita product, the product characterized by  Koszul's formula:
		\[2\langle u.v,w\rangle= \langle[u, v], w\rangle+ \langle [w,u], v\rangle+ \langle[w, v], u\rangle.\]
		The subalgebras $\g^{1}=\ker(K-Id_\g)$ and   $\g^{-1}=\ker(K+Id_\g)$  have the following properties, $\g^{1}$ and   $\g^{-1}$ are  isotropic  with respect to $\langle.,. \rangle$, Lagrangian with respect to $\om$ and checking that $\g=\g^{1}\oplus \g^{-1}$, moreover the restriction of the Levi-Civita  product  on $\g^{1}$ and $\g^{-1}$ induces a left symmetric structures. i.e.  for any $u$, $v$, $w\in\g^{1}$ (resp. $\g^{-1}$),
		\[ass(u,v,w) = ass(v,u,w)\]
		where $ass(u,v,w) = (u.v).w -u.(v.w)$. In particular, $\g^{1}$ and $\g^{-1}$ are left symmetric algebras.

		For any $u\in\g^{-1}$, let $u^*$ denote the element of $(\g^1)^*$   given by $u^*(v)=\langle u,v \rangle$. The map $u\longmapsto u^*$ realizes an isomorphism between $\g^{-1}$ and $(\g^1)^*$. Thus, we can identify  $(\g,\langle.,. \rangle,K)$  relative to the phase space $(\g^1\oplus(\g^1)^*,\langle.,.\rangle_0,K_0)$, where $\langle.,.\rangle_0$ and $K_0$ are  given by:
		\[\langle u+\al,v+\be\rangle_0 = \al(v)+\be(u)\esp K_0(u+\al) =u-\al.\]
		
		Both $\g^{1}$ and $(\g^{1})^*$ carry a left symmetric algebra structure. For any $u\in\g^{1}$ and for any $\al\in(\g^{1})^*$, we denote $L_u : \g^{1}\to\g^{1}$ and $L_\al: (\g^{1})^*\to (\g^{1})^*$ as the left multiplication by $u$ and $\al$, respectively, i.e., for any $v\in\g^{1}$ and any $\be\in(\g^{1})^*$,
		\[L_uv=u.v\esp   L_\al\be=\al.\be.\]
		
		The Levi-Civita product (and the Lie bracket) on $\g$  is determined entirely by their restrictions to $(\g^{1})^*$ and $\g^{1}$: For any $u\in\g^{1}$ and for any $\al\in\g^{1})^*$,
		\[u.\al=L^t_X\al \esp \al.u=- L^t_\al X.\]

		Conversely, let $U$ be a finite dimensional vector space and $U^*$ is its dual space. We suppose that both $U$ and $U^*$ have the structure of a left symmetric algebra. We extend the products on $U$ and $U^*$ to $U\oplus U^*$ for any $X$, $Y\in U$ and for any $\al$, $\be\in U^*$, by putting
		
		\begin{equation}\label{1}
			(X+\al).(Y+\be) = X.Y - L^t_\al Y- L^t_X\be + \al.\be.
		\end{equation}
		We say that two left symmetric products on $U$ and $U^*$  is Lie-extendible if the commutator of the product on $U\oplus U^*$ given by (1) is a Lie bracket. In this case we have the following theorem:

		\begin{theo} \label{2.1} \cite{B-B} Let $(U, .)$ and $(U^*, .)$ be two Lie-extendible left symmetric products. Then,  $(U\oplus U^*,\langle.,.\rangle_0,K_0)$, endowed with the Lie algebra bracket associated with the product given by $(1)$ is a para-K\"ahler Lie algebra.
			Where $\om_{0},\; \langle.,.\rangle_0$ and $K_0$ are  given by:
			\[\om_{0}(u+\al,v+\beta)=\beta(u)-\al(v),\quad \langle u+\al,v+\be\rangle_0 = \al(v)+\be(u)\esp K_0(u+\al) =u-\al.\]  Moreover, all para-K\"ahler Lie algebras are obtained in this manner.
		\end{theo}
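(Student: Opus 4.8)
The plan is to establish the two halves of the statement in turn. For the first half I would assume $(U,\cdot)$ and $(U^*,\cdot)$ are Lie-extendible, so that $\g:=U\oplus U^*$ equipped with the commutator $[\cdot,\cdot]$ of the product $(1)$ is a Lie algebra, and then check directly that $(\g,\langle\cdot,\cdot\rangle_0,K_0)$ satisfies the definition of a para-K\"ahler Lie algebra. The bilinear form $\langle\cdot,\cdot\rangle_0$ is symmetric and nondegenerate, $K_0^2=\mathrm{Id}$ with $+1$- and $-1$-eigenspaces $U$ and $U^*$ of equal dimension, and $K_0$ is skew-symmetric for $\langle\cdot,\cdot\rangle_0$: all of these are one-line verifications. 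Moreover $(1)$ shows $X\cdot Y\in U$ for $X,Y\in U$ and $\al\cdot\be\in U^*$ for $\al,\be\in U^*$, so $U$ and $U^*$ are subalgebras of $\g$, i.e. both eigendistributions of $K_0$ are involutive and $K_0$ is integrable.

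The crux of the first half is to recognize the product $(1)$ as the Levi-Civita product of $\langle\cdot,\cdot\rangle_0$. Since the Levi-Civita product is the unique product on a Lie algebra with a nondegenerate metric that is both torsion-free and metric (equivalently, that satisfies Koszul's formula), it is enough to verify these two properties for $(1)$. Torsion-freeness holds by construction, because $[\cdot,\cdot]$ is the commutator of $(1)$. For metricity I would compute, using $\langle X+\al,Y+\be\rangle_0=\al(Y)+\be(X)$ together with the definitions of the transpose maps $L_X^t\colon U^*\to U^*$ and $L_\al^t\colon U\to U$, that
\[\langle (X+\al)\cdot(Y+\be),\,Z+\ga\rangle_0 \;=\; (\al\cdot\be)(Z)-\be(X\cdot Z)+\ga(X\cdot Y)-(\al\cdot\ga)(Y),\]
from which $\langle (X+\al)\cdot(Y+\be),Z+\ga\rangle_0+\langle Y+\be,(X+\al)\cdot(Z+\ga)\rangle_0=0$ follows by exchanging $(Y+\be)$ and $(Z+\ga)$ in the displayed identity and using the symmetry of $\langle\cdot,\cdot\rangle_0$. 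Hence $(1)$ is the Levi-Civita product. A further short computation from $(1)$ gives $(X+\al)\cdot K_0(Y+\be)=K_0\big((X+\al)\cdot(Y+\be)\big)$, so $K_0$ is parallel; this completes the proof that $(\g,\langle\cdot,\cdot\rangle_0,K_0)$ is para-K\"ahler, and the relation $\om_0=\langle\cdot,\cdot\rangle_0\circ K_0$ follows at once from the formulas for $\langle\cdot,\cdot\rangle_0$ and $K_0$.

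For the second half I would start from an arbitrary para-K\"ahler Lie algebra $(\g,\langle\cdot,\cdot\rangle,K)$, put $U=\g^1=\ker(K-\mathrm{Id}_\g)$, and identify $\g^{-1}=\ker(K+\mathrm{Id}_\g)$ with $U^*=(\g^1)^*$ via $u\mapsto\langle u,\cdot\rangle$ as recalled before the statement. By the structural facts recalled there, the Levi-Civita product of $\g$ restricts to left-symmetric products on $\g^1$ and on $\g^{-1}$, and under the resulting identification $\g\cong\g^1\oplus(\g^1)^*$ the Levi-Civita product of $\g$ is precisely the product $(1)$ built from those two products; since it comes from a genuine Lie algebra, its commutator is a Lie bracket, so the pair is Lie-extendible. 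The same identification sends $\langle\cdot,\cdot\rangle$ to $\langle\cdot,\cdot\rangle_0$ and $K$ to $K_0$, so $(\g,\langle\cdot,\cdot\rangle,K)$ is obtained by the construction.

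The main obstacle I anticipate is the bookkeeping in the metricity computation: correctly tracking the pairing $U\times U^*\to\R$, the two different transpose operations, and the four kinds of terms in $(1)$. It is also worth being careful about where the hypotheses are used — the forward implication really only needs that the commutator of $(1)$ is a Lie bracket, whereas the left-symmetry of the two products is what makes the converse work, since it is needed to recognize the restricted Levi-Civita products (via the structure theory recalled above) as left-symmetric algebras. Everything else is short and formal.
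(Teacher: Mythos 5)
Your proof is correct: the metricity identity you display is exactly what is needed, and together with torsion-freeness it identifies the product $(1)$ as the Levi-Civita product, after which the parallelism and skew-symmetry of $K_0$, the involutivity of the eigenspaces, and the converse via the phase-space identification $\g\cong\g^1\oplus(\g^1)^*$ all go through as you describe. Note that the paper gives no proof of this statement (it is quoted from Benayadi--Boucetta), and your argument is essentially the one underlying the expository recap preceding the theorem --- where, incidentally, the displayed formula $u.\al=L^t_X\al$ carries a sign typo relative to $(1)$; your version $u\cdot\be=-L^t_u\be$ is the one consistent with $(1)$ and with your metricity computation.
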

		Let now $U$ be a 2-dimensional vector space and $U^*$  its dual space and let  $\{e_1,e_2\}$ , $\{e_3,e_4\}$ be a basis of $U$ and $U^*$.  We base  on the previous theorem and the classification of real left-symmetric algebras in dimension 2 listed below (see  Theorem 1.2. of \cite{K}),
		\begin{longtable}{l|l}
			
			$\mathfrak{b}_{1,\al}$ : $e_2.e_1=e_1,\,e_2.e_2=\al.e_2$ & $\mathfrak{b}_4$ : $e_1.e_2=e_1,\,e_2.e_2=e_1.e_2$ \\
			$\mathfrak{b}_2$   : $e_2.e_1=e_1,\,e_2.e_2= e_1 +e_2$&  $\mathfrak{b}_5^+$ :  $e_1.e_1=e_2,\,e_2.e_1=-e_1,\,e_2.e_2=-e_2$\\
			$\mathfrak{b}_{3,\al}$ $\al\not=0$: $e_1.e_2=e_1,\,e_2.e_1=(1-\frac{1}{\al})e_1,\,e_2.e_2=e_2$&$\mathfrak{b}_5^-$ : $e_1.e_1=-e_2,\,e_2.e_1=-e_1,\,e_2.e_2=-e_2$\\
			&\\
			$\mathfrak{c}_1$ : Trivial left-symmetric algebra & $\mathfrak{c}_2$ : $e_2.e_2=e_2$\\
			$\mathfrak{c}_5^+$ : $e_2.e_2=e_2,\,e_2.e_1=e_1,\,e_1.e_2=e_1,\,e_1.e_1=e_2$ &$\mathfrak{c}_3$ : $e_2.e_2=e_1$  \\
			$\mathfrak{c}_5^-$ : $e_2.e_2=e_2,\,e_2.e_1=e_1,\,e_1.e_2=e_1,\,e_1.e_1=e_2$&$\mathfrak{c}_4$ : $e_2.e_2=e_2,\,e_2.e_1=e_1,\,e_1
			e_2=e_1$.\\
		\end{longtable}
		\begin{remark}
			$\mathfrak{b}$ stands for algebras with non-commutative associated Lie algebra and $\mathfrak{c}$ stands  for algebras with commutative associated Lie algebra  .
		\end{remark}
		
		\begin{theo}\label{main2}Let $(\g,\langle.,.\rangle,K)$ be a four-dimensional   para-K\"ahler Lie algebra.
			Then there exists a basis $\{e_1,e_2,e_3,e_4\}$ of $\g$ such that
			\[\om=e^{13}+e^{24},\quad\langle.,.\rangle=\e^{13}+\e^{24}  \esp K=E_{11}+E_{22}-E_{33}-E_{44}\]
			and the non vanishing Lie brackets as listed in the Table \ref{tab4} and  \ref{tab5}.
		\end{theo}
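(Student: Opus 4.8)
The plan is to leverage Theorem \ref{2.1}, which says that every four-dimensional para-K\"ahler Lie algebra arises, up to the natural equivalence, as a Lie-extendible pair of two-dimensional left-symmetric products on $U$ and $U^*$, equipped with the standard data $\om_0,\langle.,.\rangle_0,K_0$. So the first step is simply to \emph{rename the distinguished basis}: write $\{e_1,e_2\}$ for a basis of $U=\g^1$ and $\{e_3,e_4\}$ for the dual basis of $U^*=\g^{-1}$, identified via $u\mapsto u^*$. Under this identification the universal formulas give immediately $\om=\om_0=e^{13}+e^{24}$ (from $\om_0(u+\al,v+\be)=\be(u)-\al(v)$ together with $\langle e_i,e_{j+2}\rangle=\delta_{ij}$), $\langle.,.\rangle=\langle.,.\rangle_0=\e^{13}+\e^{24}$, and $K=K_0=E_{11}+E_{22}-E_{33}-E_{44}$, since $K_0$ is $+\mathrm{Id}$ on $U$ and $-\mathrm{Id}$ on $U^*$. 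Thus the canonical triple $(\om,\langle.,.\rangle,K)$ is forced and is the same for every case; the entire content left to establish is the list of Lie brackets.

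The second step is to enumerate the Lie-extendible pairs. Here I would run over all ordered pairs $(A,B)$ with $A$ a left-symmetric structure on $U$ and $B$ one on $U^*$, both taken from the Kim list of $12$ two-dimensional real left-symmetric algebras ($\mathfrak b_{1,\al},\mathfrak b_2,\mathfrak b_{3,\al},\mathfrak b_4,\mathfrak b_5^\pm,\mathfrak c_1,\dots,\mathfrak c_5^\pm$). For each such pair, form the product on $U\oplus U^*$ by formula (\ref{1}), compute its commutator $\br$, and impose the Jacobi identity; this is the Lie-extendibility condition and it cuts the list of admissible pairs down drastically (and fixes the values of any free parameters $\al$ occurring in the factors, or relates them). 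For each surviving pair one then reads off the structure constants of $\br$ in the basis $\{e_1,e_2,e_3,e_4\}$, and these are exactly the entries to be tabulated in Tables \ref{tab4} and \ref{tab5}. The key observation that makes this finite and clean is that the left multiplications $L_\al^t$ and $L_X^t$ appearing in (\ref{1}) are built solely from $A$ and $B$, so the whole computation is algebraic in the (finitely many) structure constants of the two factors.

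The third step is to reduce modulo the remaining freedom, so that the table lists each para-K\"ahler Lie algebra once. Two sources of redundancy must be quotiented out: (i) automorphisms of the phase space that preserve $(\om_0,\langle.,.\rangle_0,K_0)$ — these are block-diagonal maps $\mathrm{diag}(P,(P^{-1})^t)$ with $P\in GL(U)$, acting on the pair $(A,B)$ by change of basis in $U$ and the dual change in $U^*$; and (ii) the $U\leftrightarrow U^*$ flip, which corresponds to $K\mapsto -K$ and swaps the two factors. Using these one brings each admissible pair to a normal form, arriving at the finite list; residual parameters like the $x$ (and $y$) appearing in Theorem \ref{1-1} survive precisely when they cannot be scaled away by such a $P$. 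Finally, one matches the resulting Lie algebras against the Ovando table of symplectic four-dimensional Lie algebras to label each $\g$, which also yields the Corollary that $\mathfrak{rr}'_{3,0}$, $\mathfrak n_4$, $\mathfrak r'_{4,0,\delta}$ and $\mathfrak d'_{4,\delta}$ carry no such structure.

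I expect the main obstacle to be bookkeeping rather than conceptual: the number of ordered pairs $(A,B)$ is on the order of a hundred (counting parameter families), and for each one must honestly verify or refute the Jacobi identity and then carry out the $GL(U)$-reduction; keeping the parameter constraints straight (e.g. which $\al$ in $\mathfrak b_{1,\al}$ or $\mathfrak b_{3,\al}$ lead to extendible pairs, and with which partner) is where errors creep in, and this is precisely why the authors note that Maple was used to check the computations. A secondary subtlety is ensuring the final list has no duplicates: two different extendible pairs can yield isomorphic para-K\"ahler Lie algebras, so the isomorphism tables promised in Section 4 are needed to certify minimality of the list in Theorem \ref{1-1}.
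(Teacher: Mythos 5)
Your overall strategy is the one the paper follows: invoke Theorem \ref{2.1} to reduce everything to the standard phase space $(U\oplus U^*,\om_0,\langle.,.\rangle_0,K_0)$ (which immediately forces the canonical form of $\om$, $\langle.,.\rangle$ and $K$), then classify the Lie-extendible pairs of two-dimensional left-symmetric products and finally match the resulting Lie algebras against Ovando's table. However, your second step contains a genuine gap. You propose to run over ordered pairs $(A,B)$ with \emph{both} factors taken verbatim from the list of normal forms of two-dimensional left-symmetric algebras. This is not an exhaustive enumeration: once a basis $\{e_1,e_2\}$ of $U$ is chosen so that $A$ is in normal form, the basis $\{e_3,e_4\}$ of $U^*$ is the dual basis and is no longer at your disposal, so the left-symmetric product on $U^*$ appears with \emph{arbitrary} structure constants in that basis, not in a normal form. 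The residual freedom $\mathrm{diag}(P,(P^{-1})^t)$ you invoke in your third step cannot repair this, because $P$ is constrained to lie in the stabilizer of $A$'s normal form, which is in general far too small to bring $B$ to a normal form of its own isomorphism class; a $GL(U)$-orbit of pairs need not contain a representative with both factors simultaneously normalized. Enumerating only normal-form pairs would therefore silently drop admissible brackets.

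The paper avoids this by fixing $A$ in normal form and taking the product on $U^*$ to be a completely arbitrary bilinear map $e_i.e_j=a_{ij}e_3+b_{ij}e_4$ ($i,j\in\{3,4\}$), then solving the Jacobi identity $\oint[[e_i,e_j],e_k]=0$ for these eight unknowns; the solutions turn out a posteriori to be left-symmetric (as they must be, since the Levi-Civita product of a para-K\"ahler Lie algebra restricts to left-symmetric products on $\g^{\pm1}$), and this is how, e.g., in the case $\mathfrak b_2$ the partner product $e_3.e_4=e_4$ is found even though it is not one of the listed normal forms in the dual basis. If you replace your pairwise enumeration by this ``first factor normalized, second factor generic'' computation (twelve cases rather than $\sim144$), the rest of your outline — including the reduction by the flip $U\leftrightarrow U^*$ and the final identification yielding the Corollary — goes through and coincides with the paper's argument. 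One further organizational remark: the paper defers the de-duplication you describe in your third step to the proof of Theorem \ref{1-1}, where it is carried out inside each target Lie algebra using its automorphism group, rather than at the level of pairs of left-symmetric algebras; either order is legitimate, but the in-target reduction is what actually certifies that the structures listed in Theorem \ref{1-1} are pairwise non-equivalent.
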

		
		\begin{proof}
			We will give the proof in the case $\B_2$ since all cases should be handled in a similar way.
			In that case the left-symmetric  product in $U$ is given by $e_2.e_1=e_1$, $e_2.e_2=e_1+e_2$ and
			let
			\begin{eqnarray*}
				e_3.e_3 &=& a_{33} e_3+ b_{33} e_4 \\
				e_3.e_4&=& a_{34} e_3+ b_{34} e_4\\
				e_4.e_3 &=& a_{43} e_3+ b_{43} e_4 \\
				e_4.e_4 &=&a_{44} e_3+ b_{44} e_4
			\end{eqnarray*}
			be an arbitrary product in $U^*$, let's look for products in $U^*$ which  satisfy the Jacobi identity $\oint[[e_i,e_j],e_k]=0$ with $1\leq i<j<k\leq 4$,  where $\oint$ denotes the cyclic sum.
			
			The  identity  $\oint[[e_1,e_2],e_3]=0$  and $\oint[[e_1,e_2],e_4]=0$ is equivalent to
			\[ \begin{cases}
				b_{34}+a_{33}+a_{43}=0\\
				a_{44}=0\\
				b_{44}+a_{43}=0\\
			\end{cases}\]
			
			suppose that $a_{44}=0$, $b_{44}+a_{43}=0$ and $b_{34}=-a_{33}-a_{43}$, the  identity $\oint[[e_1,e_3],e_4]=0$  and $\oint[[e_2,e_3],e_4]=0$ is equivalent to
			
			\[\begin{cases}
				a_{33}a_{34}-2a_{33}a_{43}-a_{34}d_{43}-a_{43}^2-a_{43}d_{43}=0\\
				a_{34}(a_{34}+a_{43})=0\\
				a_{34}=0\\
			\end{cases} \esp \begin{cases}
				(a_{34}-3a_{43})d_{33}+d_{43}(a_{33}-d_{43})=0\\
				2a_{43}^2+(2a_{33}-a_{34}+d_{43})a_{43}-a_{34}(a_{33}-d_{43})=0\\
				a_{34}=0\\
				a_{34}+d_{43}+2a_{33}=0\\
			\end{cases}\]
			we get $a_{34}= 0$, $a_{43}= 0$, $a_{33}= 0$,   and  $d_{43} = 0$. Then the product in $U^*$ is given by $e_3.e_4=e_4$
			(who is indeed a left-symmetric  product) and the Lie bracket in $U\oplus U^*$ is given by
			
			\[[e_{1},e_{2}]=-e_{1}, \, [e_{2},e_{3}]=xe_{1}-e_{3}-e_{4}, \, [e_{2},e_{4}]=-e_{4}.\]
		\end{proof}

		\begin{proof}\textit{of the Theorem \ref{1-1}}.
			
			The Theorem \ref{main2} confirms that for each Lie algebra $\G$  of the tables \ref{tab4} and \ref{tab5} there exist a base $B_0=(e_1,e_2,e_3,e_4)$ such that the  para-k\"ahler structure is given by
			\[\om=e^{13}+e^{24}  \esp K=E_{11}+E_{22}-E_{33}-E_{44}\]
			ant the Lie brackets depend on some parameters. 
			In Tables \ref{tab6} and \ref{tab7} we build a family of isomorphisms (depending on the values of parameters) from $\G$ ($\mathfrak{B_{i,j}} \; or \; \mathcal{C}_{i,j}$) onto  a four-dimensional Lie algebra,  (say $A$) of the Table \ref{tab1}.
			Each isomorphism is given by the passage matrix $P$ from $B_0$ to $B= ( f_1, f_2, f_3, f_4)$. The image by $P$ of the para-K\"ahler structure $(\omega,K)$  is given by the matrices of its component in the bases $B$ and $B^*$ by
			\[^tP\circ\omega\circ P=\omega_i\quad and \quad P^{-1}\circ K\circ P=K_i.\]
			In this way we collect all the possible para-k\"ahler structures $(\omega_i,K_i)$ on $A$. Thereafter, we proceed
			to the classification in $A$ (up to automorphism).
			
			We will give the proof in the case $\mathfrak{r} \mathfrak{r}_{3,0}$ since all cases should be handled in a similar way. We will show that the Lie algebra $\mathfrak{r} \mathfrak{r}_{3,0}$ admits two non-equivalent para-K\"ahler structures. Note  that in this case  the non vanishing Lie bracket is
			\[[f_1,f_2]=f_2\] 
			the symplectic form is $\omega_0=f^{12}+f^{34}$ and the automorphisms is
			\[T=\left( \begin {array}{cccc}
			1&0&0&0\\ 
			a_{2,1}&a_{2,2}&0&0\\ a_{3,1}&0&a_{3,3}&a_{3,4}\\ a_{4,1}&0&a_{4,3}&a_{4,4}\end {array}
			\right).\] 
		The groups of automorphisms of four dimensional Lie algebras were
		given in \cite{O}.	
			
			From Table \ref{tab6} and Table \ref{tab7}, $\mathfrak{r} \mathfrak{r}_ {3.0} $ is obtained four times.
			\begin{enumerate}
				\item The transformation:  $f_1=-e_4, \;f_2=e_2, \;f_3=e_3, \; f_4=e_1 $ gives an isomorphism from 	$\mathcal{C}_{1,6}$
				to $\mathfrak{r} \mathfrak{r}_ {3,0}$ and the   para-K\"ahler structure obtained on $\mathfrak{r} \mathfrak{r}_ {3,0}$ is
				\[\omega_1=f^{12}-f^{34}\quad and\quad K_1=-E_{11}+E_{22}-E_{33}+E_{44}.\]
				\item The transformation: $f_1=-e_2, \;f_2=-ye_2+e_4, \;f_3=e_1, \; f_4=e_3$	
				gives an isomorphism from $\mathcal{C}_{2,1}$ with $x=0$  to $\mathfrak{r} \mathfrak{r}_ {3,0}$ and the   para-K\"ahler structure obtained on $\mathfrak{r} \mathfrak{r}_ {3,0}$ is
				\[\omega_2=-f^{12}+f^{34}\quad and\quad K_2=E_{11}-2yE_{12}-E_{22}+E_{33}-E_{44}.\]
				\item The transformation:  $f_1=-e_2, \;f_2=e_4, \;f_3=e_1, \; f_4=e_3 $ gives an isomorphism from 
				$\mathcal{C}_{2,2}$ with $ x=0,\; y=0$   to $\mathfrak{r} \mathfrak{r}_ {3,0}$ and the   para-K\"ahler structure obtained on $\mathfrak{r} \mathfrak{r}_ {3,0}$ is
				\[\omega_3=-f^{12}+f^{34}\quad and\quad K_3=E_{11}-E_{22}+E_{33}-E_{44}.\]
				\item The transformation:
				$f_1=e_1-e_2, \;f_2=e_4, \;f_3=e_1 ,\; f_4=e_3 $  gives an isomorphism from
				$\mathcal{C}_{2,3} $ with $ x=0,\; y=0$
				to $\mathfrak{r} \mathfrak{r}_ {3,0}$ and the   para-K\"ahler structure obtained on $\mathfrak{r}\mathfrak{r}_{3,0}$ is
				\[\omega_4=-f^{12}+f^{24}+f^{34}\quad and\quad K_4=E_{11}-E_{22}+E_{33}-E_{44}.\]
			\end{enumerate}
			the algebra $\mathfrak{r}\mathfrak{r}_{3,0}$ support $\omega_0$ as a unique symplectic structure (up to automorphism), therefore there are four families of automorphisms $T_i$, $i\in\{1,...,4\}$ such that,$T^*_i\omega_i=\omega_0$ for $i\in\{1,...,4\}$, a direct calculation gives us
			\[T_1=\left( \begin {array}{cccc} 1&0&0&0\\ a_{2,1}&1&0&0\\
			0&0&\frac{a_{3,4}a_{4,3}-1}{a_{4,4}}&a_{3,4}\\ 0&0&a_{4,3}&a_{4,4}\end {array}
			\right),\;T_2=\left( \begin {array}{cccc} 1&0&0&0\\ a_{2,1}&-1&0&0\\
			0&0&\frac{a_{3,4}a_{4,3}+1}{a_{4,4}}&a_{3,4}\\ 0&0&a_{4,3}&a_{4,4}\end {array}
			\right)
			\] 
			\[T_3=\left( \begin {array}{cccc} 1&0&0&0\\ a_{2,1}&1&0&0\\
			0&0&\frac{a_{3,4}a_{4,3}-1}{a_{4,4}}&a_{3,4}\\ 0&0&a_{4,3}&a_{4,4}\end {array}
			\right),\;T_4=\left( \begin {array}{cccc} 1&0&0&0\\ a_{2,1}&-1&0&0\\
			-1&0&\frac{a_{3,4}a_{4,3}+1}{a_{4,4}}&a_{3,4}\\ 0&0&a_{4,3}&a_{4,4}\end {array}
			\right).
			\] 
			Thus we obtain four  para-K\"ahler structures on $\mathfrak{r}\mathfrak{r}_{3,0}$ given by $(\omega_0,K_{0i})$, $i\in{1,...,4}$ with $K_{0i}=T^{-1}_i\circ K_i\circ T_i$  a direct calculation gives us
			\begin{itemize}
				\item[]$K_{01}=-E_{11}+E_{22}-E_{33}+E_{44}$
				\item[]$K_{02}=E_{11}+2yE_{12}-E_{22}+E_{33}-E_{44}$
				\item[]$K_{03}=E_{11}-E_{22}+E_{33}-E_{44}$
				\item[]$K_{04}=-E_{11}+E_{22}+E_{33}-E_{44}$
			\end{itemize} 
			Noticing that $K_{03}$ is a sub-case of $K_{02}$ and that $(\omega_0,K_{04})$ is isomorphic to $(\omega_0,K_{01})$. Indeed we have $L^* \omega_0=\omega_0$ and $L^{-1}_i\circ K_{04}\circ L_i=K_{01}$ 
			with 
			\[L=\left(\begin {array}{cccc} 
			1&0&0&0\\ 0&1&0&0
			\\ 0&0&0&1\\ 0&0&-1&0\end {array}
			\right).
			\]
			We complete the proof by showing that $(\omega_0,K_{01})$ is not isomorphic to $(\omega_0,K_{02})$. Indeed, the symplectomorphism group of $\omega_0$ is generated by
			\[L_1=\left( \begin {array}{cccc} 1&0&0&0\\ a_{{2,1}}&1&0&0\\ 0&0&{\frac {a_{{3,4}}a_{{4,3}}+1}{a_{{4,4}}}}&a
			_{{3,4}}\\ 0&0&a_{{4,3}}&a_{{4,4}}\end {array}
			\right)
			\quad and \quad L_2=\left( \begin {array}{cccc} 1&0&0&0\\ \noalign{\medskip}a_{{2,1}}&1&0
			&0\\ \noalign{\medskip}0&0&a_{{3,3}}&a_{{3,4}}\\ \noalign{\medskip}0&0
			&-{a_{{3,4}}}^{-1}&0\end {array} \right) 
			\]
			a simple calculation gives us \[f^2((L_1^{-1}\circ K_{01}\circ L_1-K_{02})(f_2))=2\quad and \quad f^1((L_2^{-1}\circ K_{01}\circ L_2-K_{02})(f_1))=-2\] so $L_1^{-1}\circ K_{01}\circ L_1\not=K_{02}$ and $L_2^{-1}\circ K_{01}\circ L_2\not=K_{02}$. 
		\end{proof}
		\section{Application:  Curvature properties of four-dimensional  para-K\"ahler Lie algebras}

		Let now $(\G,\omega,K)$ denote a four-dimensional para-K\"ahler Lie algebra.
		Let  $\nabla : \G\times\G\longrightarrow\G$ be the Levi-Civita product associated to a left-invariant pseudo-Riemannian metric $h(X,Y)=\omega(KX,Y)$. The connection $\nabla$ is also called  Hess connection. The curvature tensor is then described in terms of the map
		\begin{equation}
			\begin{array}{rcl}
				R :\quad  \G\times\G& \longrightarrow&gl(\G) \\
				(X,Y) & \longmapsto&  R(X,Y)=\nabla_{[X,Y]}-[\nabla_X, \nabla_Y]
			\end{array}.
		\end{equation}
		The Ricci tensor is the symmetric tensor $ric$ given by
		$ric(X,Y) = tr(Z\longmapsto R(X,Z)Y)$
		and the Ricci operator $Ric : \G\longrightarrow\G$ is given by the relation $h(Ric(X),Y) = ric(X,Y).$ The scalar curvature is  defined in the standard way by  $s=tr(Ric)$.

		Recall that: $(\G,h)$  is called flat if $R= 0$, Ricci flat if $Ric= 0$ and Ricci soliton if
		\begin{equation}
			\mathcal{L}_X h +ric=\lambda h,
		\end{equation}
		where  $X=x_1e_1+x_2e_2+x_3e_3+x_4e_4$ is a vector field and $\lambda$ is a real constant, in that case if $X=0$ then $h$ is called Einstein metric and if  $\lambda$ is positive, zero, or negative then $h$ is called a shrinking, steady, or expanding Ricci soliton, respectively. We give in the following theorem some geometrical situations for the left
		invariant four-dimensional dimensional  para-K\"ahler 
		Lie groups.
		
		\begin{theo}			
			Let $(\G,\omega,K)$ be a class of para-K\"ahler Lie algebras obtained in Theorem \ref{1-1}. The associated para-K\"ahler metric and some of his properties are given in the following tables
		\begin{longtable}{clcccc}
					\hline
					\multirow{2}{*}{Lie algebra}   &\multirow{2}{*}{  Para-K\"ahler metric}   & \multirow{2}{*}{R=0} &\multirow{2}{*}{Ric=0}    &\multicolumn{2}{c}{Ricci soliton}  \\
					&&&&$\lambda$&$X$\\
					\hline
					\multirow{2}{*}{$\mathfrak{rh}_{3}$}&$\mp(\e^{14}-\e^{23})$&Yes&Yes&0&$(0,0,x_3,x_4)$\\
					&$\e^{13}-\e^{14}+\e^{23}$&Yes&Yes&0&$(0,0,x_3,x_4)$\\
					\hline
					\multirow{2}{*}{$ \mathfrak{rr}_{3,0}$}&$\pm(\e^{12}+\e^{34})$&Yes&Yes&0&$(0,0,x_3,x_4)$\\
					&$\e^{12}+x\e^{22}+\e^{34}$, $x\not=0$&No&No&&No\\
					\hline
					\multirow{2}{*}{$ \mathfrak{rr}_{3,-1}$}&$\pm(\e^{14}+\e^{23}\pm\e^{33})$&No&Yes&0&$(0,0,0,x_4)$\\
					&$\e^{14}+\e^{23}+x\e^{44}$&Yes&Yes&0&$(x_1,0,0,x_4)$\\
					&$\e^{14}\pm \e^{22}-\e^{23}$&No&Yes&$0$&$(0,0,0,x_{4})$\\
					&$\e^{14}-\e^{23}$&Yes&Yes&0&$(x_{1},0,0,x_{4})$\\
					\hline
					$ \mathfrak{r}_{2}\mathfrak{r}_{2}$ &$\pm(-\e^{12}-\mu\e^{13}+2\e^{23}+2\mu\e^{33}+\e^{34})$&No&Yes&$0$&$(0,0,0,0)$\\
					$\mu>0$&$-\e^{12}-\mu\e^{13}+\e^{34}$&No&Yes&$0$&$(0,0,0,0)$\\
					\hline
					&$\e^{12}+2\e^{14}-\e^{34}$&Yes&Yes&$-x_3$&$(x_3,0,x_3,0)$\\
					&$-\e^{12}+x\e^{22}+\e^{23}+x\e^{24}-\frac1x\e^{33}+x\e^{44}$&No&No&$\frac{3}{2}x$&$(0,0,0,0)$\\
					&$-\e^{12}-\e^{34}$&Yes&Yes&$-x_3$&$(x_3,0,x_3,0)$\\
					$\mathfrak{r}_{2}\mathfrak{r}_{2}$&$-\e^{12}-x\e^{33}+\e^{34}$&Yes&Yes&$-x_1$&$(x_1,0,x_1,\frac{x}{2}x_1)$\\
					$\mu=0$&$\e^{12}-\e^{23}-\e^{34}$&Yes&Yes&$-x_3$&$(x_3,0,x_3,0)$\\
					&$\e^{12}+x\e^{22}+\e^{34}+y\e^{44}$, $xy\not=0$, $x\not=y$ &No&No&&No\\
					&$\e^{12}+x\e^{22}+\e^{34}+x\e^{44}$, $x\not=0$ &No&No&$x$&$(0,0,0,0)$\\
					&$\e^{12}+x\e^{22}+\e^{34}$, $x\not=0$&No&No&$-x$&$(0,0,x,0)$\\
					&$\e^{12}+\e^{34}+y\e^{44}$, $y\not=0$ &No&No&$y$&$(-y,0,0,0)$\\
					&$\e^{12}+\e^{34}$&Yes&Yes&$-x_3$&$(x_3,0,x_3,0)$\\
					&$\e^{12}+x\e^{22}+x\e^{24}+\e^{34}+x\e^{44}$, $x\not=0$&No&No&$\frac{3}{2}x$&$(0,0,0,0)$\\
					\hline
					&$\e^{14}+\frac4x\e^{22}-\e^{23}+x\e^{44}$&No&No&$\frac32x$&$(0,0,0,0)$\\
					&$-\e^{14}-\e^{23}$&Yes&Yes&$-x_1$&$(x_1,0,0,0)$\\
					&$h_1$,\qquad $xy\not=0$&No&No&&No\\
					$\mathfrak{r}_{2}'$&$h_2$,\qquad $y\not=0$&No&No&$-2y$&$(0,0,0,0)$\\
					&$-(2+x)\e^{12}+(x+1)(\e^{14}+\e^{23})-x\e^{34}$, $x\not=0$&No&No&&No\\
					&$-2\e^{12}+\e^{14}+\e^{23}$&Yes&Yes&$-x_3$&$(x_3,0,x_3,0)$\\
					&$\e^{14}+\e^{23}+\e^{33}+\e^{41}+x\e^{44}$, $x\not=0$&No&No&$\frac32x$&$(-\frac32x,0,0,0)$\\
					&$\e^{14}+\e^{23}+\e^{33}+\e^{41}$&Yes&Yes&$-x_1$&$(x_1,0,0,0)$\\
					\hline
					
					$\mathfrak{r}_{4,0}$ &$\pm(\e^{14}\pm\e^{23})$&No&Yes&$0$&$(0,x_2,0,0)$\\
					\hline
					\multirow{2}{*}{$\mathfrak{r}_{4,-1}$} &$\e^{13}-\e^{24}+x\e^{33}$, $x\not=0$&No&Yes&$0$&$(0,0,0,0)$\\
					&$\pm(\e^{13}-\e^{24})$&Yes&Yes&$0$&$(0,0,0,0)$\\
					\hline
					\multirow{6}{*}{$\mathfrak{r}_{4,-1,\beta}$} &$\mp(\e^{12}\pm\e^{22}-\e^{34})$,\quad$\beta\not=0$&No&Yes&$0$&$(0,0,0,0)$\\
					&$\e^{12}\mp\e^{34}$,\qquad\qquad $\beta\not=0$&Yes&Yes&$0$&$(0,0,0,0)$\\
					&$\e^{12}-\frac1x\e^{33}+x\e^{44}$,\quad $\beta\not=0$&No&No&&No\\
					&$\mp(\e^{12}\pm\e^{22}-\e^{34})$,\quad$\beta=0$&No&Yes&$0$&$(0,0,x_3,0)$\\
					&$\e^{12}\mp\e^{34}$,\qquad\qquad $\beta=0$&Yes&Yes&$0$&$(0,0,x_3,x_4)$\\
					&$\e^{12}-\frac1x\e^{33}+x\e^{44}$,\quad $\beta=0$&No&Yes&$0$&$(0,0,x_3,x_4)$\\
					\hline
					\multirow{6}{*}{$\mathfrak{r}_{4,-1,-1}$} &$-x\e^{11}-\e^{12}-\e^{13}-\e^{34}$,\quad$x\not=0$&No&Yes&$0$&$(0,0,0,0)$\\
					&$-\e^{12}-\e^{13}-\e^{34}$&Yes&Yes&$0$&$(0,0,0,0)$\\
					&$\mp(\mp\e^{11}-\e^{12}-\e^{34})$&No&Yes&$0$&$(0,0,0,0)$\\
					&$-\e^{12}\mp\e^{34}$&Yes&Yes&$0$&$(0,0,0,0)$\\
					&$\e^{12}\mp\e^{22}-\e^{34}$&No&Yes&$0$&$(0,0,0,0)$\\
					&$\pm\e^{12}+ x\e^{33}-\e^{34}$,\quad$x\not=0$&No&No&$0$&$(0,0,0,0)$\\
					&$\e^{12}-\e^{34}$&Yes&Yes&$0$&$(0,0,0,0)$\\
										\hline
					\multirow{4}{*}{$\mathfrak{r}_{4,\alpha,-\alpha}$ }&$\mp(\e^{14}-\e^{23}\mp\e^{33})$&No&Yes&$0$&$(0,0,0,0)$\\
					&$\mp(x\e^{11}-\e^{14}+\e^{23})$,\qquad$x\not=0$&No&No&&No\\
					&$\mp(\e^{14}\mp\e^{23})$&Yes&Yes&$0$&$(0,0,0,0)$\\
					&$-\e^{14}\mp\e^{22}-\e^{23}$&No&Yes&$0$&$(0,0,0,0)$\\
					\hline
					&$\mp(\e^{12}\mp\e^{22}+\e^{34})$&Yes&Yes&$0$&$(x_1,0,0,0)$\\
					&$\e^{12}+x\e^{33}+\e^{34}$,$x\not=0$&No&Yes&$\frac{3}{2}x$&$(0,0,0,0)$\\
					&$\e^{12}+\e^{34}$&Yes&Yes&$-x_4$&$(x_1,0,0,x_4)$\\
					&$\mp(\e^{12}-\e^{34})$&Yes&Yes&$-x_4$&$(0,0,0,x_4)$\\
					$ \mathfrak{d}_{4,1}$&$-\e^{11}+\e^{12}-x\e^{13}-\e^{34}$, $x\not=0$&No&No&&No\\
					&$\mp(\e^{11}-\e^{12}+\e^{34})$&No&Yes&$0$&$(0,0,0,0)$\\
					&$-\e^{11}-\e^{12}+\e^{34}$&No&Yes&$0$&$(0,0,0,0)$\\
					&$-\e^{12}+x\e^{33}+\e^{34}$, $x\not=0$&No&No&$\frac{3}{2}x$&$(0,0,0,0)$\\
					&$\pm(\e^{12}+x\e^{22}-\e^{24}+\e^{34})$&Yes&Yes&$0$&$(x_1,0,0,0)$\\
					
					\hline
					&$\e^{12}\mp\e^{22}+\e^{34}$&No&Yes&$0$&$(\mp x_1,x_1,0,0)$\\
					&$\e^{12}+\e^{34}$&Yes&Yes&$-x_4$&$(0,x_2,0,x_4)$\\
					&$\e^{12}+x\e^{33}-\e^{34}$, $x\not=0$&No&No&$\frac32x$&$(0,0,0,0)$\\
					&$\e^{12}-\e^{34}$&Yes&Yes&$-x_4$&$(0,0,0,x_4)$\\
					&$2x\e^{11}-\e^{14}\mp\frac1x\e^{22}\mp\e^{23}$&No&No&&No\\
					$\mathfrak{d}_{4,2}$    &$-2\e^{12}\mp\e^{14}+x\e^{22}\pm\e^{23}$&No&No&&No\\
					&$\e^{14}+\e^{23}+x\e^{33}$, &No&No&$0$&$(0,0,0,0)$\\
					&$\mp(\e^{14}+\e^{23})$ &Yes&Yes&$0$&$(0,0,0,0)$\\
					&$\mp(\e^{14}-\e^{23})$ &Yes&Yes&$0$&$(0,0,0,0)$\\
					&$-x\e^{11}-\e^{14}\mp\e^{23}$, $x\not=0$ &No&No&$0$&$(0,0,0,0)$\\ 
					&$\mp x\e^{11}+\e^{14}\pm\e^{23}+2x\e^{33}$, $x\not=0$ &No&No&$0$&$(0,0,0,0)$\\ 
					&$2x\e^{11}-2x\e^{13}-\e^{14}-\e^{23}+2x\e^{33}$, $x\not=0$ &No&No&$0$&$(0,0,0,0)$\\
					\hline
					\multirow{2}{*}{ $\mathfrak{d}_{4,\frac{1}{2}}$}&$\e^{12}+x\e^{33}-\e^{34}$, $x\not=0$&No&No&$\frac32x$&$(0,0,0,0)$\\
					&$\e^{12}\mp\e^{34}$&Yes&Yes&$-x_4$&$(0,0,0,x_4)$\\
					\hline
					\multirow{5}{*}{$\mathfrak{d}_{4,\lambda}$}&$\e^{12}\mp\e^{22}+\e^{34}$&No&Yes&$0$&$(0,0,0,0)$\\
					&$\mp\e^{11}-\e^{12}+\e^{34}$&No&Yes&$0$&$(0,0,0,0)$\\
					&$\mp\e^{12}+x\e^{33}+\e^{34}$, $x\not=0$&No&No&$\frac32x$&$(0,0,0,0)$\\
					&$\mp\e^{12}+x\e^{22}-\e^{34}$, $x\not=0$&No&Yes&$0$&$(0,0,0,0)$\\
					&$\mp(\e^{12}\mp\e^{34})$&Yes&Yes&$-x_4$&$(0,0,0,x_4)$\\
					\hline
					$ \mathfrak{h}_{4}$ &$\pm(\e^{12}-\e^{34})$&No&Yes&$0$&$(0,0,0,0)$\\
					\hline
					\caption{Curvature properties of four-dimensional  para-K\"ahler Lie algebras}
				\end{longtable}
						$h_1=y(\e^{11}-\e^{13}-\e^{22}+\e^{24}+\e^{33}-\e^{44})-(2+x)\e^{12}+(x+1)(\e^{14}+\e^{23})-x\e^{34}$\\
			$h_2=y(\e^{11}-\e^{13}-\e^{22}+\e^{24}+\e^{33}-\e^{44})-2\e^{12}+\e^{14}+\e^{23}$
		\end{theo}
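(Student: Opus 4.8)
The plan is to prove the two tables by a uniform case-by-case computation over the \emph{finite} list of models produced by Theorem \ref{1-1}; below I describe the common mechanism and isolate the one point that is genuinely delicate. Throughout I would work in the normalised presentation of Theorem \ref{main2}, so that each para-K\"ahler Lie algebra carries the data $\om=e^{13}+e^{24}$, $\langle.,.\rangle=\e^{13}+\e^{24}$, $K=E_{11}+E_{22}-E_{33}-E_{44}$ with Lie bracket listed in Tables \ref{tab4}--\ref{tab5} (equivalently, one can read the models directly off Theorem \ref{1-1}).

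First I would compute, for each model, the neutral metric $h(X,Y)=\om(KX,Y)$ and then the Levi-Civita product $\na\colon\g\times\g\to\g$ from Koszul's formula $2h(\na_uv,w)=h([u,v],w)+h([w,u],v)+h([w,v],u)$; since $h$, $\om$, $K$ are left-invariant this is a finite linear computation performed once on the basis $\{e_1,e_2,e_3,e_4\}$. From $\na$ I would form $R(e_i,e_j)=\na_{[e_i,e_j]}-[\na_{e_i},\na_{e_j}]$, then $\ric(e_i,e_j)=\tr\bigl(Z\mapsto R(e_i,Z)e_j\bigr)$ and the Ricci operator via $h(\mathrm{Ric}(X),Y)=\ric(X,Y)$. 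A model goes in the ``$R=0$'' column exactly when every $R(e_i,e_j)$ vanishes and in the ``$\mathrm{Ric}=0$'' column exactly when the matrix $(\ric(e_i,e_j))$ vanishes; note $R=0$ forces $\mathrm{Ric}=0$, which already explains every coupled ``Yes/Yes'' row. To shorten the bookkeeping one can use that $\na K=0$ and $\na\om=0$, so $R(X,Y)$ commutes with $K$ and is $\om$-skew, cutting down the number of independent components of $R$ to be checked.

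For the Ricci-soliton column I would use that, on left-invariant $Y,Z$ and for $X=x_1e_1+x_2e_2+x_3e_3+x_4e_4$, $(\Li_Xh)(Y,Z)=-h([X,Y],Z)-h(Y,[X,Z])$, the term $X(h(Y,Z))$ being zero. Pairing $\Li_Xh+\ric=\la h$ against the $(e_i,e_j)$ turns the soliton condition into a finite system that is linear in $(x_1,\dots,x_4)$ and in which $\la$ enters only through the fixed matrix of $h$; solving this system in $(x_1,\dots,x_4,\la)$ produces either the (possibly one-parameter) pair $(\la,X)$ recorded in the table — an Einstein metric when $X=0$ — or no solution at all. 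In the flat rows, where $\ric=0$, the equation reduces to $\Li_Xh=\la h$, i.e. $X$ is a homothetic vector field, which is why those entries come in families with $\la$ tied to a coordinate of $X$.

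The hard part will be the bulk and the bookkeeping, not any single idea: the list coming out of Theorem \ref{1-1} is long, each model needs its own $\na$, $R$, $\ric$ and its own resolution of the soliton system, and the sign ambiguities $\pm$ together with the parameter families $(x,y,\mu,\beta,\la,\dots)$ must be carried consistently with the normalisations of Theorem \ref{1-1}. The one step that is more than routine is the ``No'' entries in the soliton column: there one must show that the linear system for $(x_1,\dots,x_4,\la)$ is inconsistent for \emph{every} value of $\la$ — a genuine incompatibility argument rather than the mere absence of an obvious candidate. All these verifications are carried out with Maple.
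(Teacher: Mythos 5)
Your proposal is correct and follows essentially the same route as the paper: for each model one computes $h=\om(K\cdot,\cdot)$, the Levi-Civita product from Koszul's formula, then $R$, $\ric$, $\mathrm{Ric}$ and $\Li_Xh$ on the basis, and solves the linear system $\Li_Xh+\ric=\la h$ in $(x_1,\dots,x_4,\la)$, with Maple handling the bulk of the case-by-case work (the paper carries this out explicitly for $\mathfrak{d}_{4,\frac12}$ and treats the remaining cases identically). Your extra observations — that $R=0$ forces $\mathrm{Ric}=0$, and that $R(X,Y)$ commutes with $K$ and is $\om$-skew — are harmless refinements that only shorten the bookkeeping.
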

		\begin{proof}
			We report below the details for the case of  $\mathfrak{d}_{4,\frac{1}{2}}$ the other cases are treated in the same way. Let $\{e_1, e_2, e_3, e_4\}$ denotes the basis used in theorem \ref{1-1} for $\mathfrak{d}_{4,\frac{1}{2}}$. The non isomorphic  para-K\"ahler structures in $\mathfrak{d}_{4,\frac{1}{2}}$ are 
			$(\omega,K_1)$ and  $(\omega ,K_2)$ with
			$\omega =e^{12}-e^{34}$,
			$K_1=E_{11}-E_{22}-E_{33}+xE_{43}+E_{44}$ and $K_2=E_{11}-E_{22}+E_{33}-E_{44}$.
			
			The corresponding compatible  metric to $(\omega,K_i)$ is uniquely determined by $h_i(X, Y) = (K_iX, Y)$. Hence, para-K\"ahler metrics in $\mathfrak{d}_{4,\frac{1}{2}}$ are of the form
			
			\[h_1=\left(\begin {array}{cccc} 0&1&0&0\\1&0&0&0
			\\ 0&0&x&1\\ 0&0&1&0\end {array}
			\right)\quad x\in\R \qquad and \qquad h_2=\left( \begin {array}{cccc} 0&1&0&0\\ 1&0&0&0
			\\ 0&0&0&-1\\ 0&0&-1&0\end {array}
			\right).
			\]
			For $h_1$, $x\not=0$,
			using the Koszul formula, the Levi-Civita connection  is described
			
			\[\nabla_{e_1}=\left(\begin {array}{cccc}
			0&0&-\frac12x&-1\\
			0&0&0&0\\
			0&1&0&0\\
			0&-\frac12x&0&0\end {array}
			\right), \qquad  \qquad\nabla_{e_2}=\left( \begin {array}{cccc}
			0&0&0&0\\
			0&0&\frac12x&0\\
			0&0&0&0\\
			-\frac12x&0&0&0\end {array}
			\right).
			\]
			\[\nabla_{e_3}=\left(\begin {array}{cccc}
			-\frac12x&0&0&0\\
			0&\frac12x&0&0\\
			0&0&x&0\\
			0&0&-x^2&-x\end {array}
			\right), \qquad  \qquad\nabla_{e_4}=\left( \begin {array}{cccc}
			-\frac12&0&0&0\\
			0&\frac12&0&0\\
			0&0&1&0\\
			0&0&-x&-1\end {array}
			\right).
			\]
			Then we  calculate the curvature matrices $R(e_i,e_j)$ (for $1\leq i<j\leq 4$) and we find
			\[R(e_1,e_2)=\left( \begin {array}{cccc} -x&0&0&0\\ 0&x&0&0\\ 0&0&\frac{x}{2}&0\\ 
			0&0&-\frac12x^{2}&-\frac12x\end {array} \right),\qquad R(e_1,e_3)=\left(\begin {array}{cccc}
			0&0&-\frac14\,{x}^{2}&-\frac{x}{2}
			\\0&0&0&0\\ 0&\frac{x}{2}&0&0
			\\ 0&-\frac14 {x}^{2}&0&0\end {array} \right)
			\]
			\[R(e_2,e_3)=\left( \begin {array}{cccc} 0&0&0&0\\ \noalign{\medskip}0&0&-\frac14{x}
			^{2}&0\\ \noalign{\medskip}0&0&0&0\\\frac14{x}^{2}&0&0
			&0\end {array} \right),\qquad R(e_2,e_4)=\left(\begin {array}{cccc} 0&0&0&0\\ 0&0&-\frac x2&0
			\\0&0&0&0\\ \frac x2&0&0&0\end {array}
			\right)
			\]
			\[R(e_3,e_4)=\left(\begin {array}{cccc} \frac x2&0&0&0\\ 0&-\frac x2&0&0
			\\ 0&0&-x&0\\0&0&x^2&x
			\end {array} \right)\qquad and\quad R(e_1,e_4)=0. 
			\]
			The Ricci tensor $ric$ and the Ricci operator $Ric$ are given
			by
			\[ric=\left( \begin {array}{cccc} 0&\frac32x&0&0\\ \frac32x&0
			&0&0\\ 0&0&\frac32{x}^{2}&\frac32x\\ \noalign{\medskip}0
			&0&\frac32x&0\end {array} \right)
			\qquad and \qquad Ric= \left( \begin {array}{cccc} \frac32x&0&0&0\\ \noalign{\medskip}0&\frac32x
			&0&0\\0&0&\frac32x&0\\ 0&0&0&\frac32x
			\end {array} \right) 
			\]
			The Lie derivative $\mathcal{L}_Xh_1$ of the metric $h_1$ with respect to an arbitrary vector field $X=x_1e_1+x_2e_2+x_3e_3+x_4e_4\in\G$ is given by
			\[\mathcal{L}_Xh_1= \left( \begin {array}{cccc} 0&-x_4&x_2x&\frac32x_2
			\\-x_4&0&-x_1x&-\frac12x_1
			\\ x_2x&x_1x&-2x_4x&xx_3-x_4
			\\ \frac 32x_2&-\frac12x_1&xx_3-x_4&2x_3\end {array} \right).\]
			Then, solving equation $\mathcal{L}_X h +ric=\lambda h$, for $x\not=0$ we obtain
			\[\lambda=\frac32x\qquad and\qquad X=0.\]
			Notice that, in this case, the   para-K\"ahler metric is a Einstein metric not Ricci flat.
			
			For $h_1$ with $x=0$ and $h_2$	
			\[\nabla_{e_1}e_2=e_3,\;\nabla_{e_1}e_4=-e_1,\;\nabla_{e_4}e_1=-\frac12e_1,\;\nabla_{e_4}e_2=\frac12e_2,\;\nabla_{e_4}e_3=e_3,\;\nabla_{e_4}e_4=-e_4\]
			This para-K\"ahler structure  is flat 	($R(e_i,e_j)=0$ for $1\leq i<j\leq 4$). The Lie derivative $\mathcal{L}_Xh_1$ of the metric $h_1$,  is given by
			\[\mathcal{L}_Xh_1=\left(\begin {array}{cccc} 0&-x_{{4}}&0&\frac32x_2
			\\-x_4&0&0&-\frac12x_1\\0&0
			&0&-x_4\\ \frac32x_2&-\frac12x_1&-x_4&2
			x_3\end {array} \right).
			\]
			Then, solving equation $\mathcal{L}_X h=\lambda h$, for $x=0$ (or for $h_2$) we obtain
			\[\lambda=-x_4\qquad and\qquad X=x_4e_4.\]
		\end{proof}		
		\section{Tables}
		{\renewcommand*{\arraystretch}{1.2}
			\begin{center}	\begin{longtable}{|l|l|}
					\hline
					Lie algebra &\multicolumn{1}{c|}{No zero brackets}\\
					\hline
					$\B_{1,\al}^1$ 
					\begin{tiny}
						$\al\not\in\{-2,-1,1\}$
					\end{tiny} &$[e_1,e_2]=-e_1,\, [e_2,e_3]=xe_1-e_3,\, [e_2,e_4]=-\al e_4$ \\
					\hline
					$\B_{1,\al}^2$
					\begin{tiny}
						$\al\not\in\{-2,-1,0,1\}$
					\end{tiny}
					&  $[e_1,e_2]=-e_1,\, [e_1,e_4]=-\frac{x}{\al} e_1,\, [e_2,e_3]=-e_3,\, [e_2,e_4]=xe_2-\al e_4$,\, $ [e_3,e_4]=\frac{x}{\al} e_3$\\
					\hline
					{$\B_{1,-2}^1$}& $ [e_1,e_2]=-e_1,\, [e_1,e_4]=xe_1,\, [e_2,e_3]=ye_1-e_3,\;[e_2,e_4]=2xe_2+2e_4,\;[e_3,e_4]=-xe_3$\\
					\hline
					$\B_{1,-2}^2$&$ [e_1,e_2]=-e_1,\, [e_1,e_3]=xe_1,\, [e_2,e_3]=ye_1-xe_2-e_3,\;[e_2,e_4]=xe_1+2e_4,\;[e_3,e_4]=-2xe_4$\\
					\hline
					$\B_{1,-1}^1$& $[e_1,e_2]=-e_1,\, [e_1,e_3]=-xe_1,\, [e_2,e_3]=ye_1+xe_2-e_3, \, [e_2,e_4]=e_4, \, [e_3,e_4]=xe_4$\\
					\hline
					$\B_{1,-1}^2$& $[e_1,e_2]=-e_1,\, [e_1,e_4]=xe_1,\, [e_2,e_3]=-e_3, \, [e_2,e_4]=xe_2+e_4, \, [e_3,e_4]=-xe_3$\\
					\hline
					$\B_{1,0}$&$[e_1,e_2]=-e_1, \, [e_1,e_4]=xe_{1}, \, [e_2,e_3]=-e_3, \, [e_3,e_4]=-xe_3$\\
					\hline
					\multirow{2}{*}{$\B_{1,1}^1$\; $x\not=0$}&$[e_1,e_2]=-e_1, \, [e_{1},e_{3}]=-\frac{y}{2}e_1, \, [e_1,e_4]=-xe_1, \,
					[e_2,e_3]=\frac{y^2}{2x}e_1+\frac{y}{2}e_{2}-e_{3}$\\& $[e_{2},e_{4}]=ye_{1}+xe_{2}-e_{4}, \, [e_{3},e_{4}]=xe_{3}-\frac{y}{2}e_{4}$\\
					\hline
					$\B_{1,1}^2$&$[e_1,e_2]=-e_1,\, [e_2,e_3]=xe_1-e_3, \, [e_2,e_4]=-e_4$\\
					\hline
					
					$\B_{2}$&$[e_{1},e_{2}]=-e_{1}, \, [e_{2},e_{3}]=xe_{1}-e_{3}-e_{4}, \, [e_{2},e_{4}]=-e_{4}$  \\
					\hline
					$\B_{3,\alpha}^1 \; \alpha\neq0$& $[e_1,e_2]=\frac{1}{\alpha} e_1,\, [e_1,e_3]=[e_2,e_4]=-e_4,\, [e_2,e_3]=xe_1+\frac{1-\alpha}{\alpha}e_3$\\
					\hline
					$\B_{3,\alpha}^2\; \alpha\neq0$& $[e_1,e_2]=\frac{1}{\alpha} e_1,\, [e_1,e_3]=[e_2,e_4]=x\alpha e_2-e_4,\, [e_1,e_4]=xe_1, \, [e_2,e_3]=\frac{1-\alpha}{\alpha}e_3, \, [e_3,e_4]=x(\alpha-1)e_3$\\
					\hline
					$\B_{3,\frac{1}{2}}^1$& $[e_1,e_2]=2e_1,\, [e_1,e_3]=x e_1-e_4, \, [e_2,e_3]=ye_1-\frac{x}{2}e_2+e_3, \,[e_2,e_4]=-e_4,\; [e_3,e_4]=-\frac{x}{2} e_4$\\
					\hline
					\multirow{2}{*}{$\B_{3,\frac{1}{2}}^2$ $y\neq0$}& $[e_1,e_2]=2e_1,\, [e_1,e_3]=-2x e_1+\frac{y}{2} e_2-e_4, \,[e_1,e_4]=ye_1,\; [e_2,e_3]=-\frac{3x^{2}}{y} e_1+\frac{x}{2}e_2+e_3$\\
					&$[e_2,e_4]=xe_1+\frac{y}{2}e_2-e_4,\; [e_3,e_4]=-\frac{y}{2} e_3-\frac{x}{2} e_4$\\
					\hline
					$\B_{3,\frac{1}{2}}^3$& $[e_1,e_2]=2e_1,\, [e_1,e_3]=[e_2,e_4]=x e_1-e_4, \, [e_2,e_3]=ye_1-xe_2+e_3, \, [e_3,e_4]=-2x e_4$\\
					\hline
					$\B_{3,1}$& $[e_1,e_2]=e_1,\, [e_1,e_3]=x e_1+ye_2-e_4, \,[e_1,e_4]=ye_1,\; [e_2,e_3]=ze_1, \, [e_2,e_4]=y e_2-e_4$\\
					\hline
					$\B_{4}$&$[e_{1},e_{2}]=e_{1}, \, [e_{1},e_{3}]=xe_{1}-e_{4}, \, [e_{2},e_{3}]=ye_{1}-e_{4}, \, [e_{2},e_{4}]=-e_{4}$\\
					\hline
					$\B_{5,1}^{+}$&$[e_{1},e_{2}]=e_{1}, \, [e_{1},e_{4}]=x e_{1}-e_3, \, [e_{2},e_{3}]=e_{3}, \, [e_{2},e_{4}]=-2xe_{2}+2e_4,\;[e_3,e_4]=-xe_3$\\
					\hline
					$\B_{5,2}^{+}$&$[e_{1},e_{2}]=e_{1}, \, [e_{1},e_{4}]=-\frac{x}{4} e_{1}-e_3, \, [e_{2},e_{3}]=xe_1+e_{3}, \, [e_{2},e_{4}]=-\frac{x}{2} e_{2}+2e_4,\;[e_3,e_4]=\frac{x}{4} e_3$\\
					\hline
					$\B_{5,3}^{+}$&$[e_{1},e_{2}]=e_{1}, \, [e_{1},e_{3}]=-x e_{1}, \, [e_{1},e_{4}]=-2xe_1-xe_2-e_{3}, \, [e_{2},e_{3}]=2xe_1+xe_2+e_{3}$\\&$[e_2,e_4]=3x e_1+2xe_2+2e_4,\;[e_3,e_4]=2xe_3-2xe_4$\\
					\hline
					$\B_{5,4}^{+}$&$[e_{1},e_{2}]=e_{1}, \, [e_{1},e_{3}]=-x e_{1}, \, [e_{1},e_{4}]=2xe_1-xe_2-e_{3}$\\&$ [e_{2},e_{3}]=-2xe_1+xe_2+e_{3},\;[e_2,e_4]=3x e_1-2xe_2+2e_4,\;[e_3,e_4]=-2xe_3-2xe_4$\\
					\hline
					$\B_{5,1}^{-}$&$[e_{1},e_{2}]=e_{1}, \, [e_{1},e_{4}]=x e_{1}+e_3, \,  [e_{2},e_{3}]=e_{3},\;[e_2,e_4]=-2x e_2+2e_4,\;[e_3,e_4]=-xe_3$\\
					\hline
					$\B_{5,2}^{-}$&$[e_{1},e_{2}]=e_{1}, \, [e_{1},e_{4}]=\frac{x}{4} e_{1}+e_3, \,  [e_{2},e_{3}]=xe_1+e_{3},\;[e_2,e_4]=\frac{x}{2} e_2+2e_4,\;[e_3,e_4]=-\frac{x}{4} e_3$\\
					\hline
					\caption{Four dimensional Para-K\"ahler Lie algebras coming from $\mathfrak{b}$}
					\label{tab4}
				\end{longtable}
		\end{center}}

		\newpage
		{\renewcommand*{\arraystretch}{1.4}
			\begin{center}	\begin{longtable}{|l|l|}
					\hline
					Lie algebra &\multicolumn{1}{c|}{No zero brackets}\\
					\hline
					$\mathcal{C}_{1,1}$&$[e_{1},e_{4}]=e_{1}, \, [e_2,e_4]=\alpha e_2 , \; [e_3,e_4]=-e_3$\\
					\hline
					$\mathcal{C}_{1,2}$&$[e_{1},e_{4}]=e_{1}+e_2, \, [e_2,e_4]= e_2 , \; [e_3,e_4]=-e_3$\\
					\hline
					$\mathcal{C}_{1,3}$&$[e_{1},e_{3}]=[e_{2},e_{4}]=e_{2}, \, [e_1,e_4]=(1-\frac{1}{\alpha})e_1, \; [e_{3},e_{4}]=\frac{1}{\alpha}e_3$\\
					\hline
					$\mathcal{C}_{1,4}$&$[e_{1},e_{3}]=[e_{1},e_{4}]=[e_{2},e_{4}]=e_{2}, \, [e_3,e_4]=e_3$\\
					\hline
					$\mathcal{C}_{1,5}^{+}$&$[e_{1},e_{4}]=-e_{1}, \; [e_{2},e_{3}]=e_1 , \, [e_2,e_4]=-2 e_2,\;[e_3,e_4]=e_3$\\
					\hline
					$\mathcal{C}_{1,5}^{-}$&$[e_{1},e_{4}]=[e_{2},e_{3}]=-e_{1}, \, [e_2,e_4]=-2 e_2,\;[e_3,e_4]=e_3$\\
					\hline
					$\mathcal{C}_{1,6}$&$[e_{2},e_{4}]=e_{2}$\\
					\hline
					$\mathcal{C}_{1,7}$&$[e_{1},e_{4}]=e_{2}$\\
					\hline
					$\mathcal{C}_{1,8}$&$[e_{1},e_{3}]=[e_{2},e_{4}]=e_{2}, \, [e_1,e_4]= e_1$\\
					\hline
					$\mathcal{C}_{1,9}$&$[e_{1},e_{3}]=[e_{2},e_{4}]=e_{2}, \, [e_1,e_4]=[e_2,e_3]= e_1$\\
					\hline
					$\mathcal{C}_{1,10}$&$[e_{1},e_{3}]=[e_{2},e_{4}]=e_{2}, \, [e_1,e_4]= e_1 , \; [e_2,e_3]=-e_1$\\
					\hline
					$\mathcal{C}_{2,1}$&$[e_{1},e_{3}]=xe_{1}, \, [e_2,e_4]=ye_2-e_4$\\
					\hline
					
					$\mathcal{C}_{2,2}$&$[e_{1},e_{3}]=xe_{1}, \,[e_2,e_3]=ye_1,\; [e_2,e_4]=-e_4$\\
					\hline
					$\mathcal{C}_{2,3}$&$[e_{1},e_{3}]=xe_{1}, \,[e_2,e_3]=ye_1,\; [e_2,e_4]=xe_1-e_4,\;[e_3,e_4]=-xe_4$\\
					\hline
					$\mathcal{C}_{3,1}$&$[e_{1},e_{3}]=xe_{1}, \,[e_2,e_3]=ye_1+ze_2-e_4,\; [e_3,e_4]=ze_4$\\
					\hline
					$\mathcal{C}_{3,2}$&$[e_{1},e_{3}]=[e_2,e_4]=xe_{1}, \,[e_2,e_3]=ye_1+ze_2-e_4,\; [e_3,e_4]=(z-x)e_4$\\
					\hline
					$\mathcal{C}_{4,1}$&$[e_{1},e_{3}]=[e_2,e_4]=xe_{1}-e_4, \,[e_2,e_3]=ye_1+xe_2-e_3$\\
					\hline
					$\mathcal{C}_{4,2}$&$[e_{1},e_{3}]=[e_2,e_4]=xe_{2}-e_4, \,[e_2,e_3]=-e_3,\; [e_3,e_4]=xe_3$\\
					\hline
					$\mathcal{C}_{5,1}^{+}$&$[e_{1},e_{3}]=[e_{2},e_{4}]=xe_{1}+ye_2-e_4, \,[e_1,e_4]=[e_{2},e_{3}]=ye_1+xe_2-e_3$\\
					\hline
					$\mathcal{C}_{5,2}^{+}$&$[e_{1},e_{3}]=[e_2,e_4]=xe_{2}-e_4, \,[e_1,e_4]=[e_2,e_3]=-e_3,\;[e_3,e_4]=xe_3$\\
					\hline
					$\mathcal{C}_{5,1}^{-}$&$[e_{1},e_{3}]=[e_2,e_4]=xe_{1}+ye_2-e_4, \,[e_1,e_4]=ye_1-xe_2+e_3,\;[e_2,e_3]=-ye_1+xe_2-e_3$\\
					\hline
					$\mathcal{C}_{5,2}^{-}$&$[e_{1},e_{3}]=[e_2,e_4]=xe_{2}-e_4, \,[e_1,e_4]=e_3,\;[e_2,e_3]=-e_3,\;[e_3,e_4]=xe_3$\\
					\hline
					\caption{Four dimensional Para-K\"ahler Lie algebras coming from $\mathfrak{c}$}
					\label{tab5}
				\end{longtable}
		\end{center}}

		{\renewcommand*{\arraystretch}{1.4}	
			\begin{center}
				
				\begin{longtable}{|ll|p{9.5cm}|c|}
					\hline
					Source && \multicolumn{1}{c|}{Isomorphism}&Target \\
					\hline
					$\B_{1,\al}^1$& $\vert\al\vert<1$&$f_1=e_1,\,f_2=-\frac{x}{2}e_1+e_3,\,f_3=e_4,\,f_4=e_2$& $\mathfrak{r}_{4,-1,-\al}$\\
					\hline
					$\B_{1,\al}^1$&$ \vert\al\vert>1,\;\al\neq-2$&$f_1=e_4,\,f_2=e_1,\,f_3=-\frac{x}{2}e_1+e_3,\,f_4=-\frac{1}{\vert\al\vert} e_2$& $\mathfrak{r}_{4,-\frac{1}{\vert\al\vert},\frac{1}{\vert\al\vert}}$\\
					\hline
					$\B_{1,\al}^2$&$ \vert\al\vert<1,\; \al\neq0$&$f_1=e_1,\,f_2=e_3,\,f_3=-\frac{x}{\al}e_2+e_4,\,f_4=\frac{-x+\al}{\al}e_2+e_4$& $\mathfrak{r}_{4,-1,-\al}$\\
					\hline
					$\B_{1,\al}^2$&$ \vert\al\vert>1,\; \al\neq-2$&$f_1=-\frac{x}{ \vert\al\vert} e_2+e_4,\,f_2=e_1,\,f_3=e_3,\,f_4=-\frac{x+1}{ \vert\al\vert}e_2+e_4$& $\mathfrak{r}_{4,-\frac{1}{\vert\al\vert},\frac{1}{\vert\al\vert}}$\\
					\hline			
					$\B_{1,-2}^1$&$ x,y\neq0$& $f_1=-ye_1+e_2+\frac{1}{x}e_4 ,\; f_2=-\frac{1}{2}ye_1+e_3 ,\; f_3=ye_1 ,\; f_4=e_2+e_3$&$\mathfrak{d}_{4,2}$\\
					\hline
					$\B_{1,-2}^1$&$ x\neq0,\; y=0$& $f_1=e_2+\frac{1}{x}e_4 ,\; f_2=e_3 ,\; f_3=e_1 ,\; f_4=e_2+\frac{1}{2x}e_4 $&$\mathfrak{r}_{4,-\frac{1}{2},\frac{1}{2}}$\\
					\hline
					$\B_{1,-2}^1$&$ x=0$& $f_1=e_4 ,\; f_2=-\frac{y}{2}e_1+e_3 ,\; f_3=e_1 ,\; f_4=\frac{1}{2}e_2  $&$\mathfrak{r}_{4,-\frac{1}{2},\frac{1}{2}}$\\			
					\hline
					$\B_{1,-2}^2$&$ x\neq0$&$f_1=e_4, \; f_2=\frac{1}{2x}e_1+e_2+\frac{1}{x} e_3, \; f_3=-xe_1, \; f_4=-\frac{1}{x}e_3+(\frac{y+1}{x^2})e_4$&$\mathfrak{d}_{4,2}$\\
					\hline	
					$\B_{1,-2}^2$&$ x=0$&$f_1=e_4, \; f_2=-\frac{y}{2}e_1+e_3, \; f_3=e_1, \; f_4=\frac{1}{2}e_2$&$\mathfrak{r}_{4,-\frac{1}{2},\frac{1}{2}} $\\
					\hline	
					$\B_{1,-1}^1$&&$f_1=-\frac{1}{2}ye_1-xe_2+e_{3} ,\; f_2=e_1 ,\; f_3=e_4 ,\; f_4=-e_2$&$\mathfrak{r}_{4,-1,-1}$\\
					\hline
					$\B_{1,0}^1$&&$f_1=e_2,\,f_2=e_1,\,f_3=-\frac{x}{2}e_1+e_3,\,f_4=e_4$&$\mathfrak{r}\mathfrak{r}_{3,-1}$\\
					\hline	
					$\B_{1,0}^2$&&$f_1=e_2,\,f_2=e_1,\,f_3=e_3,\,f_4=xe_2+e_4$&$\mathfrak{r}\mathfrak{r}_{3,-1}$\\
					\hline	
					$\B_{1,0}^3$&&$f_1=-e_2,\,f_2=-\frac{x}{2} e_1+e_{3},\,f_3=e_1,\,f_4=e_4$&$\mathfrak{r}\mathfrak{r}_{3,-1}$\\
					\hline
					$\B_{1,1}^1$& $x\neq0$& $f_1=e_{1}, \;f_2=xe_{3}-\frac{y}{2}e_{4},\; f_{3}=\frac{y}{2}e_{1}+xe_{2}-e_{4} , \; f_4=e_2 $& $\mathfrak{r}_{4,-1,-1}$\\
					\hline
					$\B_{1,1}^2$&& $f_1=e_1, \;f_2=-\frac{x}{2} e_1+e_3, \;f_3=e_{4}, \; f_4=e_2 $& $\mathfrak{r}_{4,-1,-1}$\\
					\hline
					$\B_{2}$& &$f_1=e_1, \;f_2=-e_4, \;f_3=-\frac{x}{2}e_1+e_{3}, \; f_4=e_2 $& $\mathfrak{r}_{4,-1}$\\
					\hline
					$\B_{3,\alpha}^1$&$ \frac{1}{\alpha}>\frac{1}{2}$& $f_1=e_1, \;f_2=-xe_1+\frac{\alpha-2}{\alpha} e_3, \;f_3=-\frac{\alpha-2}{\alpha}e_4, \; f_4=-e_2 $& $\mathfrak{d}_{4,\frac{1}{\alpha}}$\\
					\hline
					$\B_{3,\alpha}^1$&$ \frac{1}{\alpha}<\frac{1}{2}$& $f_1=-xe_1+\frac{\alpha-2}{\alpha}e_3, \;f_2=e_1, \;f_3=\frac{\alpha-2}{\alpha}e_4, \; f_4=-e_2 $& $\mathfrak{d}_{4,\frac{\alpha-1}{\alpha}}$\\
					\hline
					$\B_{3,2}^1$&$ x\neq0$&$f_1=-xe_1, \;f_2=e_3, \;f_3=xe_4, \; f_4=-e_2 $& $\mathfrak{h}_{4}$\\
					\hline
					$\B_{3,2}^1$&$ x=0$&$f_1=e_3, \;f_2=e_1, \;f_3=e_4, \; f_4=-e_2 $& $\mathfrak{d}_{4,\frac{1}{2}}$\\
					\hline
					$\B_{3,\alpha}^2$&$\frac{1}{\alpha}>\frac{1}{2}$& $f_1=e_1+x \alpha e_2-e_4, \;f_2=e_3, \;f_3=x \alpha e_2-e_4, \; f_4=-e_2+\frac{\alpha-1}{\alpha}e_3 $& $\mathfrak{d}_{4,\frac{1}{\alpha}}$\\
					\hline
					$\B_{3,\alpha}^2$&$\frac{1}{\alpha}<\frac{1}{2}$& $f_1=-x \alpha e_2+e_3+e_4, \;f_2=e_1, \;f_3=-x \alpha e_2+e_4, \; f_4=\frac{1}{\alpha}e_1-e_2 $& $\mathfrak{d}_{4,\frac{\alpha -1}{\alpha}}$\\
					\hline
					$\B_{3,2}^2$&& $f_1=e_3, \;f_2=e_1, \;f_3=-2x e_2+e_4, \; f_4=-e_2 $& $\mathfrak{d}_{4,\frac{1}{2}}$\\
					\hline
					$\B_{3,\frac{1}{2}}^1$& &$f_1=e_1, \;f_2=\frac{1}{3}y e_1-\frac{1}{2}xe_2+e_3, \;f_3=-e_4, \; f_4=-e_2 $& $\mathfrak{d}_{4,2}$\\
					\hline
					$\B_{3,\frac{1}{2}}^2$&$ y\neq0$& $f_1=e_1, \;f_2=-e_3-\frac{x}{y}e_4, \;f_3=xe_1-\frac{1}{2}ye_2+e_4, \; f_4=-\frac{2}{y} e_4 $& $\mathfrak{d}_{4,2}$\\
					\hline
					$\B_{3,1}$&$ x=0,\; y=0$& $f_1=e_1, \;f_2=ze_1+e_3, \;f_3=-e_4, \; f_4=-e_2 $& $\mathfrak{d}_{4,1}$\\
					\hline
					$\B_{3,1}$&$ x\neq0,\; y=0$& $f_1=\frac{z}{x}e_1-e_2+\frac{1}{x}e_3, \;f_2=e_4, \;f_3=-\frac{z}{x} e_1-\frac{1}{x}e_3, \; f_4=e_1-\frac{1}{x}e_4 $& $\mathfrak{r}_{2}\mathfrak{r}_{2}$\\
					\hline
					$\B_{3,1}$&$ x=0,\; y\neq0,\; z\neq0$& $f_1=-\frac{\sqrt{yz}}{2y}e_1-\frac{1}{2} e_2-\frac{1}{2\sqrt{yz}}e_3, \;f_2=-\sqrt{yz}e_1-ye_2+ e_4, \;f_3=\frac{\sqrt{yz}}{2y}e_1-\frac{1}{2} e_2+\frac{1}{2\sqrt{yz}}e_3, \; f_4=\sqrt{yz}e_1-ye_2+ e_4 $& $\mathfrak{r}_{2}\mathfrak{r}_{2}$\\
					\hline
					$\B_{3,1}$&$x=0,\; y\neq0,\; z=0$& $f_1=e_1, \,f_2=e_3, \,f_3=ye_2-e_4, \, f_4=-e_2 $& $\mathfrak{d}_{4,1}$\\
					\hline
					$\B_{3,1}$&$xy\neq0,\, x^{2}+4yz=0$& $f_1=-ye_2+e_4, \,f_2=-\frac{x^{2}}{4y}e_1-\frac{x}{2}e_2+e_3, \,f_3=\frac{x^{2}}{4}e_1+\frac{xy}{2} e_2-\frac{x}{2}e_4, \, f_4=-e_2 $& $\mathfrak{d}_{4,1}$\\
					\hline		
					$\B_{3,1}$&$ xy\neq0,\, x^{2}+4yz>0$& $f_1=\frac{z}{\sqrt{x^{2}+4yz}}e_1-\frac{x+\sqrt{x^{2}+4yz}}{2\sqrt{x^{2}+4yz}}e_2+\frac{1}{\sqrt{x^{2}+4yz}}e_3, \,f_2=(-x+\sqrt{x^{2}+4yz})e_1-2ye_2+2e_4, \,f_3=-\frac{z}{\sqrt{x^{2}+4yz}}e_1+\frac{x-\sqrt{x^{2}+4yz}}{2\sqrt{x^{2}+4yz}}e_2-\frac{1}{\sqrt{x^{2}+4yz}}e_3, \, f_4=-\frac{x+\sqrt{x^{2}+4yz}}{2}e_1-ye_2+e_4 $& $\mathfrak{r}_{2}\mathfrak{r}_{2}$\\
					\hline
					$\B_{3,1}$&$ xy\neq0,\, x^{2}+4yz<0$
					& $f_1=\frac{\sqrt{-x^{2}-4yz}}{2}e_1-e_2, \,f_2=-\frac{x\sqrt{-x^{2}-4yz}-4z}{2\sqrt{-x^{2}-4yz}} e_1-\frac{y\sqrt{-x^{2}-4yz}+x}{\sqrt{-x^{2}-4yz}}e_2+\frac{2}{\sqrt{-x^{2}-4yz}} e_3+e_4, \,f_3=\frac{\sqrt{-x^{2}-4yz}}{2}e_1, \, f_4=-\frac{x}{2}e_1-ye_2+e_4 $& $\mathfrak{r}_{2}'$\\
					\hline
					$\B_{4}$&$ x=0$& $f_1=-e_1, \,f_2=ye_1+e_3-e_4, \,f_3=e_4, \, f_4=- e_2 $& $\mathfrak{d}_{4,1}$\\
					\hline
					$\B_{4}$&$ x\neq0$& $f_1=\frac{x-y}{x} e_1-\frac{1}{x}e_3, \,f_2=-xe_1+e_4, \,f_3=\frac{y}{x} e_1-e_2+\frac{1}{x}e_3, \, f_4=e_4 $& $\mathfrak{r}_{2}\mathfrak{r}_{2}$\\
					\hline
					$\B_{5,1}^+$&& $f_1=-xe_2-e_3+e_4, \,f_2=e_1, \,f_3=e_3, \, f_4= e_1+e_2 $& $\mathfrak{d}_{4,2}$\\
					\hline
					$\B_{5,2}^+$&$ x\neq0$& $f_1=\frac{1}{2} e_2-\frac{2}{x} e_4, \,f_2=e_3, \,f_3=\frac{x}{2}e_1 +e_3, \, f_4=\frac{4}{x} e_4 $& $\mathfrak{d}_{4,2}$\\
					\hline
					$\B_{5,2}^+ $&$ x=0$& $f_1=e_3+e_4, \,f_2=e_1, \,f_3=e_3, \, f_4=- e_1+e_2 $& $\mathfrak{d}_{4,2}$\\
					\hline
					$\B_{5,3}^+ $&$ x\neq0$& $f_1=-e_3-e_4, \,f_2=e_1, \,f_3=xe_1+xe_2+e_3, \, f_4=- \frac{1}{x} e_3 $& $\mathfrak{d}_{4,2}$\\
					\hline
					$\B_{5,3}^+ $&$ x=0$& $f_1=e_4, \,f_2=e_1, \,f_3=e_3, \, f_4= e_2 $& $\mathfrak{d}_{4,2}$\\
					\hline
					$\B_{5,4}^+ $&$ x\neq0$&$f_1=e_3+e_4, \,f_2=e_1, \,f_3=-xe_1+xe_2+e_3, \, f_4=- e_1+e_2 $ & $\mathfrak{d}_{4,2}$\\
					\hline
					$\B_{5,4}^+$&$ x=0$& $f_1=-e_3+e_4, \,f_2=e_1, \,f_3=e_3, \, f_4=e_1+e_2 $& $\mathfrak{d}_{4,2}$\\
					\hline
					$\B_{5,1}^- $&& $f_1=-xe_2+e_4, \,f_2=e_1, \,f_3=-e_3, \, f_4=e_2 $& $\mathfrak{d}_{4,2}$\\
					\hline
					$\B_{5,2}^- $&$ x\neq0$& $f_1=e_2+\frac{4}{x} e_4, \,f_2=e_3, \,f_3=xe_1+2e_3, \, f_4=-\frac{4}{x} e_4 $& $\mathfrak{d}_{4,2}$\\
					\hline
					$\B_{5,2}^- $&$ x=0$& $f_1=e_3+e_4, \,f_2=e_1, \,f_3=-e_3, \, f_4= e_1+e_2 $& $\mathfrak{d}_{4,2}$\\
					\hline
					\caption{Isomorphisms from the Lie algebras obtained in Table \ref{tab4}  onto the  Lie algebras in Table \ref{tab1}}
					\label{tab6}
				\end{longtable}	
		\end{center}}
		
		{\renewcommand*{\arraystretch}{1.4}	
			\begin{center}
				\begin{longtable}{|ll|p{8cm}|c|}
					\hline
					Source && \multicolumn{1}{c|}{Isomorphism}&Target \\
					\hline
					$\mathcal{C}_{1,1} $&$ -1\le \alpha <1$& $f_1=e_1, \;f_2=e_3, \;f_3=e_2, \; f_4=-e_4 $& $\mathfrak{r}_{4,-1,\alpha}$\\
					\hline
					$\mathcal{C}_{1,1} $&$ \alpha <-1$& $f_1=e_2, \;f_2=e_1, \;f_3=e_3, \; f_4=-\frac{1}{\alpha}e_4 $& $\mathfrak{r}_{4,\frac{1}{\alpha},-\frac{1}{\alpha}}$\\
					\hline
					$\mathcal{C}_{1,1} $&$ \alpha >1$& $f_1=e_2, \;f_2=e_3, \;f_3=e_1, \; f_4=-\frac{1}{\alpha}e_4 $& $\mathfrak{r}_{4,-\frac{1}{\alpha},\frac{1}{\alpha}}$\\
					\hline
					$\mathcal{C}_{1,1} $&$ \alpha=1$& $f_1=e_3, \;f_2=e_1, \;f_3=e_2, \; f_4=e_4 $& $\mathfrak{r}_{4,-1,-1}$\\
					\hline
					$\mathcal{C}_{1,2}$&& $f_1=e_3, \;f_2=-e_2, \;f_3=e_1, \; f_4=e_4 $& $\mathfrak{r}_{4,-1}$\\
					\hline
					$\mathcal{C}_{1,3}$&$ 0<\alpha \leq 2$& $f_1=e_2-e_3, \;f_2=e_1, \;f_3=e_2, \; f_4=\frac{\alpha-1}{\alpha}e_1 -e_4 $& $\mathfrak{d}_{4,\frac{1}{\alpha}}$\\
					\hline
					$\mathcal{C}_{1,3}$&$ \alpha <0\; or \; \alpha>2$& $f_1=e_1+\alpha e_2, \;f_2=e_3, \;f_3=e_2, \; f_4=e_3 -e_4 $& $\mathfrak{d}_{4,\frac{\alpha-1}{\alpha}}$\\
					\hline
					$\mathcal{C}_{1,4}$&& $f_1=e_3, \;f_2=e_1-e_2, \;f_3=-e_2, \; f_4=-e_4 $& $\mathfrak{d}_{4,1}$\\
					\hline
					$\mathcal{C}_{1,5}^{+}$&& $f_1=e_2, \;f_2=e_3, \;f_3=e_1, \; f_4=e_4 $& $\mathfrak{d}_{4,2}$\\
					\hline
					$\mathcal{C}_{1,5}^{-}$&& $f_1=e_2, \;f_2=-e_3, \;f_3=e_1, \; f_4=e_4 $& $\mathfrak{d}_{4,2}$\\
					\hline
					$\mathcal{C}_{1,6}$&& $f_1=-e_4, \;f_2=e_2, \;f_3=e_3, \; f_4=e_1 $& $\mathfrak{r} \mathfrak{r}_{3,0}$\\
					\hline
					$\mathcal{C}_{1,7}$&& $f_1=e_1, \;f_2=e_4, \;f_3=e_2, \; f_4=e_3 $& $\mathfrak{r} \mathfrak{h}_{3}$\\
					\hline
					$\mathcal{C}_{1,8}$&& $f_1=e_1, \;f_2=e_3, \;f_3=e_2, \; f_4=-e_4 $& $\mathfrak{d}_{4,1}$\\
					\hline
					$\mathcal{C}_{1,9}$&& $f_1=\frac{1}{2} e_3-\frac{1}{2}e_4, \;f_2=e_1-e_2, \;f_3=-\frac{1}{2} e_3-\frac{1}{2}e_4, \; f_4=e_1+e_2 $& $\mathfrak{r}_{2}\mathfrak{r}_{2}$\\
					\hline
					$\mathcal{C}_{1,10}$&& $f_1=-e_4, \;f_2=e_3, \;f_3=e_1, \; f_4=-e_2 $& $\mathfrak{r}_{2}'$\\
					\hline
					$\mathcal{C}_{2,1}$&$ x=0$& $f_1=-e_2, \;f_2=-ye_2+e_4, \;f_3=e_1, \; f_4=e_3 $& $\mathfrak{r}\mathfrak{r}_{3,0}$\\
					\hline
					$\mathcal{C}_{2,1} $&$ x\neq0$& $f_1=-\frac{1}{x} e_3, \;f_2=e_1, \;f_3=-e_2, \; f_4=-xye_2+xe_4 $& $\mathfrak{r}_{2}\mathfrak{r}_{2}$\\
					\hline
					$\mathcal{C}_{2,2} $&$ x\neq0$& $f_1=-\frac{1}{x} e_3, \;f_2=e_1, \;f_3=\frac{y}{x}e_1 -e_2, \; f_4=e_4 $& $\mathfrak{r}_{2}\mathfrak{r}_{2}$\\
					\hline
					$\mathcal{C}_{2,2} $&$ x=0,\; y\neq0$& $f_1=e_4, \;f_2=-ye_1, \;f_3=e_3, \; f_4=-e_2 $& $\mathfrak{r}_{4,0}$\\
					\hline
					$\mathcal{C}_{2,2}$&$ x=0,\; y=0$& $f_1=-e_2, \;f_2=e_4, \;f_3=e_1, \; f_4=e_3 $& $\mathfrak{r}\mathfrak{r}_{3,0}$\\
					\hline
					$\mathcal{C}_{2,3} $&$ x\neq0$& $f_1=\frac{y}{x}e_1 -e_2, \;f_2=-xe_1+ e_4, \;f_3=e_2-\frac{1}{x}e_3 ,\; f_4=e_1 $& $\mathfrak{r}_{2}\mathfrak{r}_{2}$\\
					\hline	
					$\mathcal{C}_{2,3} $&$ x=0,\; y\neq0$& $f_1=e_4, \;f_2=-ye_1, \;f_3=e_3 ,\; f_4=-e_2 $& $\mathfrak{r}_{4,0}$\\
					\hline
					$\mathcal{C}_{2,3} $&$ x=0,\; y=0$& $f_1=e_1-e_2, \;f_2=e_4, \;f_3=e_1 ,\; f_4=e_3 $& $\mathfrak{r}\mathfrak{r}_{3,0}$\\
					\hline
					$\mathcal{C}_{3,1} $&$z=0 ,\; x\neq0$& $f_1=e_1, \;f_2=-\frac{1}{x}e_4, \;f_3=-\frac{y}{x}e_1+e_2 ,\; f_4=-\frac{1}{x}
					e_3 $& $\mathfrak{r}_{4,0}$\\
					\hline
					$\mathcal{C}_{3,1} $&$z=0 ,\; x=0$& $f_1=-e_2, \;f_2=e_2-e_3, \;f_3=ye_1-e_4 ,\; f_4=
					e_1 $& $ \mathfrak{r} \mathfrak{h}_{3}$\\
					\hline
					$\mathcal{C}_{3,1} $&$z\neq0 ,\; x=0$& $f_1=\frac{1}{z} e_3, \;f_2=e_4, \;f_3=-2ye_1-2ze_2+e_4 ,\; f_4=
					e_1 $& $ \mathfrak{r} \mathfrak{r}_{3,-1}$\\
					\hline
					$\mathcal{C}_{3,1} $&$z\neq0,\; -1\leq \frac{-x}{z}<1$& $f_1=e_4, \;f_2=\frac{2yz}{x-z}e_1-2ze_2+e_4, \;f_3=e_1 ,\; f_4=\frac{1}{z}
					e_3 $& $ \mathfrak{r}_{4,-1,-\frac{x}{z}}$\\
					\hline
					$\mathcal{C}_{3,1} $&$z\neq0,\; x\neq0,\;  \frac{-x}{z}>1$& $f_1=e_1, \;f_2=\frac{2yz}{x-z}e_1-2ze_2+e_4, \;f_3=e_4, \; f_4=-\frac{1}{x}
					e_3 $& $ \mathfrak{r}_{4,\frac{z}{x},-\frac{z}{x}}$\\
					\hline
					$\mathcal{C}_{3,1} $&$z\neq0,\; x\neq0,\;  \frac{-x}{z}<-1$& $f_1=e_1, \;f_2=e_4, \;f_3= \frac{2yz}{x-z}e_1-2ze_2+e_4, \; f_4=-\frac{1}{x}
					e_3 $& $ \mathfrak{r}_{4,-\frac{z}{x},\frac{z}{x}}$\\
					\hline
					$\mathcal{C}_{3,1} $&$z\neq0,\;  \frac{-x}{z}=1$& $f_1=-\frac{y}{2x}e_1+e_{2}+\frac{1}{2x}e_{4}, \;f_2=e_4, \;f_3= e_1, \; f_4=\frac{1}{x}
					e_3 $& $ \mathfrak{r}_{4,-1,-1}$\\
					\hline
					$\mathcal{C}_{3,2}$&$z=0 ,\; x=0$& $f_1=-e_2, \;f_2=e_2-e_3, \;f_3=ye_1-e_4 ,\; f_4=
					e_1 $& $ \mathfrak{r} \mathfrak{h}_{3}$\\
					\hline				
					$\mathcal{C}_{3,2} $&$z=0 ,\; x\neq0$& $f_1=e_4, \;f_2=-\frac{y}{x}e_1+ e_2+\frac{1}{x} e_4, \;f_3=-xe_1 \; f_4=
					-\frac{1}{x} e_3 $& $\mathfrak{d}_{4,1}$\\
					\hline
					$\mathcal{C}_{3,2} $&$z\neq0 ,\; x=0$& $f_1=\frac{1}{z} e_3, \;f_2=e_4, \;f_3=-2ye_1-2ze_2+e_4 \; f_4=
					e_1 $& $ \mathfrak{r} \mathfrak{r}_{3,-1}$\\
					\hline	
					$\mathcal{C}_{3,2} $&$z\neq0 ,\; x\neq0,\; \frac{z}{x}=\frac{1}{2}$& $f_1=-\frac{1}{2z} e_4, \,f_2=-\frac{y}{z} e_1+e_2, \,f_3=e_1 \, f_4=-\frac{1}{2z}
					e_3 $& $\mathfrak{h}_{4}$\\
					\hline
					$\mathcal{C}_{3,2}  $&$\neq0,\; x\neq0,\,  \frac{z}{x}<\frac{1}{2}$& $f_1=\frac{y(x-2z)}{z}e_1+e_4, \,f_2=(x-2z)e_2+e_4, \;f_3= -x(x-2z)e_1,\, f_4=-\frac{y(x-2z)}{x^{2}}
					e_2-\frac{1}{x}e_3 $& $ \mathfrak{d}_{4,\frac{x-z}{x}}$\\
					\hline
					$\mathcal{C}_{3,2} $&$z\neq0,\, x\neq0,\,\frac{z}{x}>\frac{1}{2}$& $f_1=(x-2z)e_2+e_4, \,f_2=\frac{y(x-2z)}{z}e_1+e_4, \,f_3= x(x-2z)e_1,\, f_4=-\frac{y(x-2z)}{x^{2}}
					e_2-\frac{1}{x}e_3 $& $ \mathfrak{d}_{4,\frac{z}{x}}$\\
					\hline
					$\mathcal{C}_{4,1}$&& $f_1=-ye_1-xe_2+e_3, \;f_2=e_1, \;f_3=-xe_1+e_4 \; f_4=
					-xe_1-e_2+e_4 $& $ \mathfrak{d}_{4,1}$\\
					\hline
					$\mathcal{C}_{4,2}$&& $f_1=e_3, \;f_2=e_1, \;f_3=-xe_2+e_4 \; f_4=
					-e_2 $& $ \mathfrak{d}_{4,1}$\\		
					\hline
					$\mathcal{C}_{5,1}^{+}$&& $f_1=\frac{1}{2} e_1-\frac{1}{2}e_2, \;f_2=(-x+y)e_1+(x-y)e_2-e_3+e_4, \;f_3=-(\frac{1}{2}+x+y)e_1-(\frac{1}{2}+x+y)e_2+e_3+e_4, \; f_4=
					-(x+y)e_1-(x+y)e_2+e_3+e_4$& $\mathfrak{r}_{2} \mathfrak{r}_{2}$\\
					\hline
					$\mathcal{C}_{5,2}^{+}$&& $f_1=\frac{1}{2} e_1-\frac{1}{2}e_2, \,f_2=-xe_2-e_3+e_4, \;f_3=-\frac{1}{2}e_1-\frac{1}{2}e_2,\, f_4=
					-xe_2+e_3+e_4 $& $ \mathfrak{r}_{2}\mathfrak{r}_{2}$\\
					\hline
					$\mathcal{C}_{5,1}^{-}$&& $f_1=ye_1-(x+1)e_2+e_3, \;f_2=-(x+1)e_1-ye_2+e_4, \;f_3=ye_1-xe_2+e_3 \; f_4=
					-xe_1-ye_2+e_4 $& $ \mathfrak{r}_{2}'$\\
					\hline
					$\mathcal{C}_{5,2}^{-}$&& $f_1=-e_2-e_3, \;f_2=e_1-xe_2+e_4, \;f_3=-e_3 \; f_4=
					-xe_2+e_4 $& $ \mathfrak{r}_{2}'$\\
					\hline
					\caption{Isomorphisms from the Lie algebras obtained in Table \ref{tab5}   onto the  Lie algebras in Table \ref{tab1}}
					\label{tab7}
				\end{longtable} 
		\end{center}}

		\section*{Acknowledgments:} The authors would like to thank sincerely  Professor Mohamed Boucetta for his many suggestions which were of great help to improve our work.

	\end{document}